\newtheorem{thm}{Theorem}
\newtheorem{remark}[thm]{Remark}
\newtheorem{prop}[thm]{Proposition}
\newtheorem{coro}[thm]{Corollary}
\newtheorem{lemma}[thm]{Lemma}
\newtheorem*{theoremA}{Theorem A}
\def\dim{\operatorname{dim}}
\def\corank{\operatorname{corank}}
\def\Im{\operatorname{Im}}
\def\Pic{\operatorname{Pic}}%
\def\Aut{\operatorname{Aut}}%
\def\codim{\operatorname{codim}}%
\newcommand \lra {\rightarrow}
\title{Components of the Hilbert Scheme of smooth projective curves using ruled surfaces}
\author[Y. Choi]{Youngook Choi}
\address{Department of Mathematics Education, Yeungnam University, 280 Daehak-Ro, \hfill \newline\texttt{}
 \indent Gyeongsan, Gyeongbuk 38541, Republic of Korea}
\email{ychoi824@yu.ac.kr}
\author[H. Iliev]{Hristo Iliev}
\address{American University in Bulgaria, 2700 Blagoevgrad, Bulgaria, and \hfill \newline\texttt{}
 \indent Institute of Mathematics and Informatics, Bulgarian Academy of Sciences, \hfill \newline\texttt{}
 \indent 1113 Sofia, Bulgaria}
\email{ hiliev@aubg.edu, hki@math.bas.bg}
\author[S. Kim]{Seonja Kim}
\address{Department of Electronic Engineering, Chungwoon University, Sukgol-ro, Nam-gu, \hfill \newline\texttt{}
 \indent Incheon 22100, Republic of Korea}
\email{sjkim@chungwoon.ac.kr}
\thanks{The first author was supported by Basic Science Research Program through the National Research Foundation of Korea(NRF) 
funded by the Ministry of Education(NRF-2016R1D1A3B03933342).  The third  author was supported by Basic Science Research 
Program through the National Research Foundation of Korea(NRF) funded by the Ministry of Education (NRF-2016R1D1A1B03930844).}
\subjclass[2000]{Primary 14C05; Secondary 14H10}
\keywords{Hilbert scheme of curves, double covering, ruled surfaces}
\begin{document}

\setlength{\parindent}{5ex}

\begin{abstract}
Let $\mathcal{I}_{d,g,r}$ be the union of irreducible components of the Hilbert scheme whose general points correspond to smooth 
irreducible non-degenerate curves of degree $d$ and genus $g$ in $\mathbb{P}^r$. We use families of curves on cones to show that 
under certain numerical assumptions for $d$, $g$ and $r$, the scheme $\mathcal{I}_{d,g,r}$ acquires generically smooth 
components whose general points correspond to curves that are double covers of irrational curves. In particular, in the case 
$\rho(d,g,r) := g-(r+1)(g-d+r) \geq 0$ we construct explicitly a \emph{regular component} that is different from the 
distinguished component of $\mathcal{I}_{d,g,r}$ dominating the moduli space $\mathcal{M}_g$. Our result implies also 
that if $g \geq 57$ then $\mathcal{I}_{\frac{4g}{3}, g, \frac{g+1}{2}}$ has at least two generically smooth 
components parametrizing linearly normal curves.
\end{abstract}

\subjclass[2000]{Primary 14C05; Secondary 14H10}
\keywords{Hilbert scheme of curves, Brill-Noether theory, double covering}

\maketitle

\section{Introduction}\label{Section_1}

Let $\mathcal{I}_{d,g,r}$ be the union of irreducible components of  the Hilbert scheme whose general points correspond 
to smooth irreducible non-degenerate complex curves of degree $d$ and genus $g$  in $\mathbb{P}^r$. A component of 
$\mathcal{I}_{d,g,r}$ is called \emph{regular} if it is reduced and of expected dimension $\lambda_{d,g,r} := (r+1)d - 
(r-3)(g-1)$. 
Otherwise it is called \emph{superabundant}.
For $\rho(d,g,r) := g-(r+1)(g-d+r) \ge 0$, it is known that $\mathcal{I}_{d,g,r}$ has the unique component dominating
$\mathcal{M}_g$, see \cite[p. 70]{Har82}. It is usually referred to as the \emph{distinguished} component.

Historically, Severi claimed in \cite{Sev1921} that $\mathcal{I}_{d,g,r}$ is irreducible if $d\geq g+r$. 
It was proved that $\mathcal{I}_{d,g,r}$ is irreducible if $d\geq g+r$ and $r = 3, 4$, see \cite{Ein86} and \cite{Ein87}.
On the other hand,  for $r\geq 5$ (and $\rho(d,g,r)\geq 0$) there have been given several examples in which $\mathcal{I}_{d,g,r}$ possesses additional 
non-distinguished components (\cite{MS1989}, \cite{Kee94},  \cite{CIK17}, \cite{CS89},  etc), but for none of them it has been proven to be 
regular. Note that all these examples are given
by non-linearly normal curves.  We remark that in \cite{CIK17} we showed the existence of a {\it non-distinguished} component $\mathcal{D}_{d,g,r}$ of 
$\mathcal{I}_{d,g,r}$ parameterizing curves that are double covers of irrational curves, whereas all other known to us examples 
of reducible Hilbert schemes of curves have used curves that are $m$-sheeted coverings of $\mathbb{P}^1$ with $m \geq 3$.

In \cite[Question 4.7, p. 598]{CIK17} we asked about the possibility of $\mathcal{D}_{d,g,r}$ being reduced. In the present 
paper we reconstruct this component under less constrains, this time using a family of curves on cones, which are double 
coverings of hyperplane sections of the cones. We construct and characterize the properties of $\mathcal{D}_{d,g,r}$ using tools 
from the theory of ruled surfaces, while in \cite{CIK17} we only showed its existence using Brill-Noether theory of linear 
series on curves. Our approach is motivated by the fact that for a given double covering $\varphi : X \to Y$ the curves $X$ and 
$Y$ can be regarded as curves on the ruled surface $S := \mathbb{P} (\varphi_{\ast} {\mathcal O _X})$, as we explain in section 
\ref{Section_2}. It allows us to construct the additional component in a more geometric way and to obtain its generic 
smoothness, which gives  an affirmative answer to the  question raised in \cite{CIK17}.

Our main result is as follows.

\begin{theoremA} 
Assume that $g$ and $\gamma$ are integers with $g \geq 4\gamma -2 \geq 38$.
Let
\[
  d:= 2g - 4\gamma + 2 \quad \mbox{ and } \quad \max \left\lbrace \gamma, \frac{2(g-1)}{\gamma} \right\rbrace \leq r \leq R:= g 
- 3\gamma + 2 \, .
\] 
Then the Hilbert scheme $\mathcal{I}_{d, g, r}$ possesses a generically reduced component $\mathcal{D}_{d, g, r}$ for which
\[ 
  \dim \mathcal{D}_{d, g, r} = \lambda_{d, g, r} + r\gamma - 2g+2 \, .
\] 
Further, let $X_r \subset \mathbb{P}^r$ be a smooth curve corresponding to a general point  of $\mathcal{D}_{d, g, r}$.
\begin{enumerate}
  \item[{\rm (i)}] If $r = R$ then $X_R$ is the intersection of a general quadric  hypersurface with a cone over a smooth 
curve $Y$ of degree $g - 2\gamma + 1$ and genus $\gamma$ in $\mathbb{P}^{R-1}$ and $X_R$ is embedded in $\mathbb{P}^R$ by the 
complete linear series $|R_{\varphi}|$ on $X_R$, where $R_{\varphi}$ is the ramification divisor of the natural projection 
morphism $\varphi : X_R \to Y$ of degree 2 given by the ruling of cone;
  \item[{\rm (ii)}] If $r < R$ then $X_r$ is given by a general projection of some $X_R$ as in {\rm (i)}, that is, $X_r$ is 
embedded in $\mathbb{P}^{r}$ by a general linear subseries $g^r_d$ of $|R_{\varphi}|$.
\end{enumerate} 
\end{theoremA}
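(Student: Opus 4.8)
The plan is to build $\mathcal{D}_{d,g,r}$ as the closure of an explicitly parametrized family of embedded double covers, to compute the dimension of that family, and then to identify it with an entire component of $\mathcal{I}_{d,g,r}$ by showing that $h^0(X,N_{X/\mathbb{P}^r})$ equals the family dimension; generic reducedness follows at once, as do the descriptions (i)--(ii).

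\textbf{Construction.} Write $e:=g-2\gamma+1=d/2$ and recall $R=g-3\gamma+2$. Start from a smooth curve $Y$ of genus $\gamma$, a line bundle $\mathcal{L}\in\Pic^{e}(Y)$, and a divisor $B\in|\mathcal{L}^{\otimes 2}|$. For general such data $\mathcal{L}$ is nonspecial and very ample (here $e\ge 2\gamma-1$, i.e.\ $g\ge 4\gamma-2$, is used), so $|\mathcal{L}|$ embeds $Y\hookrightarrow\mathbb{P}^{R-1}$ and $B$ is reduced. Let $\varphi:X\to Y$ be the double cover branched over $B$; by Riemann--Hurwitz $X$ is smooth of genus $g$ and $\mathcal{O}_X(R_\varphi)\cong\varphi^{*}\mathcal{L}$. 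The projection formula gives $h^0(X,\varphi^{*}\mathcal{L})=h^0(\mathcal{L})+h^0(\mathcal{O}_Y)=R+1$ and $h^1(X,\varphi^{*}\mathcal{L})=h^1(\mathcal{O}_Y)=\gamma$, and using the $\iota$-anti-invariant section of $\varphi^{*}\mathcal{L}$ (whose divisor is $R_\varphi$) to separate points and tangent vectors of the fibres of $\varphi$, one checks that $|R_\varphi|$ is very ample. Thus $X=X_R\hookrightarrow\mathbb{P}^R$, and since $\varphi^{*}$ of the (quadratic) equations of $Y$ are satisfied by $X_R$, the curve $X_R$ lies on the cone $\tilde Y\subset\mathbb{P}^R$ over $Y$; comparing degrees and using that $\varphi$ has degree $2$ along the rulings identifies $X_R=\tilde Y\cap Q$ for a quadric $Q$. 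For $r<R$ take $X_r$ to be the image of $X_R$ under projection from a general $(R-r-1)$-plane, i.e.\ the embedding of $X$ by a general linear subseries $g^r_d\subset|R_\varphi|$; the hypotheses $\gamma\le r$ and $2(g-1)/\gamma\le r$ ensure this is still an embedding and, as seen below, that the relevant cohomology vanishes.

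\textbf{Dimension count.} Letting $(Y,\mathcal{L},B)$ vary and adding the choice of projective frame and of $g^r_d$, the parameter space has dimension
\[
  (3\gamma-3)+\gamma+\bigl(h^0(\mathcal{L}^{\otimes 2})-1\bigr)+\bigl((r+1)^2-1\bigr)+(r+1)(R-r),
\]
with $h^0(\mathcal{L}^{\otimes 2})-1=2e-\gamma$ by nonspeciality, the fourth summand $\dim\PGL{r+1}{\mathbb{C}}$, and the last (absent for $r=R$) equal to $\dim\Grass(r+1,R+1)$. A general curve $X\subset\mathbb{P}^r$ obtained this way recovers its (unique) containing cone, hence $Y$ via the involution $X\to X/\Aut(X)$, and then $\mathcal{L}$ and $B$ up to finite ambiguity, so the induced map to $\mathcal{I}_{d,g,r}$ is generically finite onto its image $\mathcal{D}_{d,g,r}$; a short simplification using $R+1=g-3\gamma+3$ rewrites the displayed integer as $\lambda_{d,g,r}+r\gamma-2g+2$. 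Since $\mathcal{M}_\gamma$, $\Pic^{e}(Y)$, $|\mathcal{L}^{\otimes 2}|$, the frame bundle and $\Grass(r+1,R+1)$ are all irreducible, $\overline{\mathcal{D}_{d,g,r}}$ is irreducible and $\dim\overline{\mathcal{D}_{d,g,r}}\ge\lambda_{d,g,r}+r\gamma-2g+2$.

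\textbf{Normal bundle, and the main obstacle.} The heart of the proof is the reverse estimate $h^0(X,N_{X/\mathbb{P}^r})\le\lambda_{d,g,r}+r\gamma-2g+2$ for general $X$; since $\chi(N_{X/\mathbb{P}^r})=\lambda_{d,g,r}$, this is the equality $h^1(X,N_{X/\mathbb{P}^r})=r\gamma-2g+2$. For $r=R$ I use
\[
  0\longrightarrow N_{X_R/\tilde Y}\longrightarrow N_{X_R/\mathbb{P}^R}\longrightarrow N_{\tilde Y/\mathbb{P}^R}\big|_{X_R}\longrightarrow 0 .
\]
As $X_R$ avoids the vertex, $N_{X_R/\tilde Y}=\mathcal{O}_{\tilde Y}(2)|_{X_R}=\varphi^{*}\mathcal{L}^{\otimes 2}$ is nonspecial ($\deg=2d$), so $h^1(N_{X_R/\tilde Y})=0$ and $h^1(N_{X_R/\mathbb{P}^R})=h^1(N_{\tilde Y/\mathbb{P}^R}|_{X_R})$. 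Because the equations of $\tilde Y$ are pulled back from those of $Y$, one has $N_{\tilde Y/\mathbb{P}^R}|_{X_R}\cong\varphi^{*}N_{Y/\mathbb{P}^{R-1}}$, hence
\[
  h^1\bigl(N_{\tilde Y/\mathbb{P}^R}|_{X_R}\bigr)=h^1\bigl(N_{Y/\mathbb{P}^{R-1}}\bigr)+h^1\bigl(N_{Y/\mathbb{P}^{R-1}}\otimes\mathcal{L}^{-1}\bigr).
\]
For general $Y$ the first term vanishes, and the second is computed from $0\to T_Y\otimes\mathcal{L}^{-1}\to T_{\mathbb{P}^{R-1}}|_Y\otimes\mathcal{L}^{-1}\to N_{Y/\mathbb{P}^{R-1}}\otimes\mathcal{L}^{-1}\to 0$ and the conormal Euler sequence of $\mathbb{P}^{R-1}$ restricted to $Y$; the computation reduces to surjectivity of the multiplication map $H^0(K_Y)\otimes H^0(\mathcal{L})\to H^0(K_Y\otimes\mathcal{L})$ for general $(Y,\mathcal{L})$ and yields $h^1(N_{Y/\mathbb{P}^{R-1}}\otimes\mathcal{L}^{-1})=R\gamma-2g+2$ (equivalently $h^0(N_{Y/\mathbb{P}^{R-1}}\otimes\mathcal{L}^{-1})=R$). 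For $r<R$ one descends along the general-projection sequence
\[
  0\longrightarrow (\varphi^{*}\mathcal{L})^{\oplus(R-r)}\longrightarrow N_{X_R/\mathbb{P}^R}\longrightarrow N_{X_r/\mathbb{P}^r}\longrightarrow 0 ;
\]
since $h^1(\varphi^{*}\mathcal{L})=\gamma$ and $H^1(\varphi^{*}\mathcal{L})^{\oplus(R-r)}\to H^1(N_{X_R/\mathbb{P}^R})$ is injective for a general projection, $h^1(N_{X_r/\mathbb{P}^r})=R\gamma-2g+2-(R-r)\gamma=r\gamma-2g+2$. Combining with the lower bound, $h^0(N_{X/\mathbb{P}^r})=\dim T_{[X]}\mathcal{I}_{d,g,r}=\dim\overline{\mathcal{D}_{d,g,r}}$ at a general point, so $\overline{\mathcal{D}_{d,g,r}}$ is a generically reduced component of the asserted dimension; a general member varies in moduli in a family of dimension at most $(3\gamma-3)+\gamma+(2e-\gamma)=2g-\gamma-1<3g-3$, so $\mathcal{D}_{d,g,r}$ is not the distinguished component, and (i)--(ii) hold by construction. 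The genuinely delicate points, where the hypotheses $g\ge 4\gamma-2\ge 38$ and $\gamma\le r$, $2(g-1)/\gamma\le r\le R$ are consumed, are exactly the vanishings $h^1(N_{Y/\mathbb{P}^{R-1}})=0$ and $h^0(N_{Y/\mathbb{P}^{R-1}}\otimes\mathcal{L}^{-1})=R$ for general $(Y,\mathcal{L})$ and the injectivity of $H^1(\varphi^{*}\mathcal{L})^{\oplus(R-r)}\hookrightarrow H^1(N_{X_R/\mathbb{P}^R})$ for a general projection, i.e.\ ruling out any parasitic cohomology.
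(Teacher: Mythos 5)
Your overall strategy is the paper's: realize the general curve as a quadric section of a cone over $Y\subset\mathbb{P}^{R-1}$ (equivalently, embed the double cover by $|R_\varphi|=|\varphi^*\mathcal{L}|$), use the exact sequence $0\to\mathcal{O}_X(2)\to N_{X/\mathbb{P}^R}\to\varphi^*N_{Y/\mathbb{P}^{R-1}}\to 0$ together with $\varphi_*\mathcal{O}_X=\mathcal{O}_Y\oplus\mathcal{L}^{-1}$, and match $h^0(N_{X/\mathbb{P}^r})$ against the parameter count (which you compute correctly). The gap is in the one step that carries all the difficulty: the claim that $h^0(N_{Y/\mathbb{P}^{R-1}}\otimes\mathcal{L}^{-1})=R$ ``reduces to surjectivity of the multiplication map $H^0(\omega_Y)\otimes H^0(\mathcal{L})\to H^0(\omega_Y\otimes\mathcal{L})$.'' It does not. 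Chasing your two sequences, surjectivity of $\mu$ only gives $h^0(T_{\mathbb{P}^{R-1}}|_Y\otimes\mathcal{L}^{-1})=R$; one then still has $h^0(N_{Y/\mathbb{P}^{R-1}}\otimes\mathcal{L}^{-1})=R+\dim\ker\bigl(H^1(T_Y\otimes\mathcal{L}^{-1})\to H^1(T_{\mathbb{P}^{R-1}}|_Y\otimes\mathcal{L}^{-1})\bigr)$, and the Serre dual of that kernel is the cokernel of the \emph{Gaussian (Wahl) map} $\Phi_{\omega_Y,\mathcal{L}}\colon\ker\mu\to H^0(\omega_Y^2\otimes\mathcal{L})$, not of $\mu$. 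This is exactly the Ciliberto--Miranda identity $h^0(N_{Y/\mathbb{P}^{R-1}}(-1))=h^0(\mathcal{L})+\corank\,\Phi_{\omega_Y,\mathcal{L}}$, and the surjectivity of $\Phi_{\omega_Y,\mathcal{L}}$ is a genuinely deeper input (Ciliberto--Lopez--Miranda), which is precisely where the hypothesis $\gamma\ge 10$ is consumed. A sanity check that something is missing: $\mu$ is surjective for every $\gamma$ in this degree range, so if your reduction were correct the theorem would hold with no lower bound on $\gamma$ --- but cones over low-genus curves are obstructed exactly because this Gaussian map fails to be surjective.

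The second gap is in the case $r<R$. Your projection sequence $0\to(\varphi^*\mathcal{L})^{\oplus(R-r)}\to N_{X_R/\mathbb{P}^R}\to N_{X_r/\mathbb{P}^r}\to 0$ is fine, and you correctly observe that the needed conclusion forces $(R-r)\gamma\le R\gamma-2g+2$, i.e. $r\ge 2(g-1)/\gamma$; but the injectivity of $H^1(\varphi^*\mathcal{L})^{\oplus(R-r)}\to H^1(N_{X_R/\mathbb{P}^R})$ for a general center is asserted, not proved, and it is not a routine genericity statement --- it is equivalent to the surjectivity of the Gaussian map $\Phi_{\omega_Y,V}$ restricted to a general $r$-dimensional subspace $V\subset H^0(\mathcal{L})$. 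The paper avoids your sequence altogether: it projects the whole cone, shows the projected curve $X_r$ sits on the projected cone with the same normal-bundle sequence (its Corollary \ref{Corollary_ConeProjNormalB_2}), and then proves $h^0(N_{Y_r/\mathbb{P}^{r-1}}(-1))=r$ for the \emph{incomplete} embedding via Ballico's maximal-rank theorem for $\Phi_{\omega_Y,V}$ together with a dimension count of $\ker\mu_V$ (its Proposition \ref{Prob_Gauss_NBundle}); this is where both inequalities $r\ge\gamma$ and $r\ge 2(g-1)/\gamma$ are actually used. To complete your argument you must either prove the asserted $H^1$-injectivity or import the restricted-Gaussian-map surjectivity; as written, the two cohomological vanishings you flag as ``delicate'' are exactly the content of the theorem that remains unproved.
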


In our view, one of the interesting implications of {\rm Theorem A} is that if $r = \frac{2(g-1)}{\gamma} \geq \gamma 
\geq 10$ and $d = 2g - 4\gamma + 2$, then the scheme $\mathcal{I}_{d, g, r}$ acquires a second regular component 
in addition to its distinguished component dominating the moduli space $\mathcal{M}_g$, see {\rm Corollary 
\ref{Coro_Reg_comp_exists}}. To our best knowledge, it is the first example in which simultaneous existence of two distinct 
regular components of $\mathcal{I}_{d,g,r}$ has been observed in the Brill-Noether case $\rho(d,g,r) \geq 0$. We remark also 
that in the case $g = 6\gamma - 3$ and $r = R = 3\gamma - 1$, the Hilbert scheme $\mathcal{I}_{\frac{4g}{3}, g, \frac{g+1}{2}}$ 
has at least two generically smooth components parametrizing linearly normal curves as it is explained in {Remark 
\ref{Sec3_2_LinNorm_comp_exists}}.

The remaining sections of the paper are organized as follows. In section \ref{Section_2}, we provide a motivation for the 
construction of the component described in {\rm Theorem A} by reviewing the relations between double coverings of curves, ruled 
surfaces and their embeddings as cones. We also prove there several statements that will be used for the construction of
$\mathcal{D}_{d, g, r}$ in section \ref{Section_4}. Possibly, some of them might be of independent interest. In section 
\ref{Section_3} we briefly review several facts about the Gaussian map associated to linear series on curves and prove a 
technical result facilitating the computation of the dimension of the tangent space at a general point of $\mathcal{D}_{d, g, 
r}$. In section \ref{Section_4} we give the proof of {\rm Theorem A}.

We work over $\mathbb{C}$. We understand by \emph{curve} a smooth integral projective algebraic curve. We denote by $L^{\vee}$ 
the dual line bundle for a given line bundle $L$ defined on an algebraic variety $X$. As usual, $\omega_X$ will stand for the 
canonical line bundle on $X$. We denote by $|L|$ the complete linear series $\mathbb P\left(H^0(X,L)\right)$. When $X$ is an 
object of a family, we denote by $[X]$ the corresponding point of the Hilbert scheme representing the family. Throughout the 
entire paper
\[
  d := 2g - 4\gamma + 2 \quad \mbox{ and } \quad R := g - 3\gamma + 2 \, .
\]
For definitions and properties of the objects not explicitly introduced in the paper refer to \cite{Hart77} and \cite{ACGH}.

\noindent
{\bf Acknowledgements}

We thank KIAS for the warm hospitality when we were associate members in KIAS and the second author visited 
there. We would like to thank the referees for the constructive comments and valuable suggestions, which helped to 
improve the quality of our paper.

\bigskip

\section{Motivation and preliminary results}\label{Section_2}

Suppose that $\varphi : X \to Y$ is an $m:1$ cover, $m \geq 2$, where $X$ and $Y$ are smooth curves of genus $g$ and $\gamma$, 
correspondingly. As it is well known, the covering induces a short exact sequence of vector bundles on $Y$
\[
 0 \to \mathcal{O}_Y \xrightarrow{\varphi^{\sharp}} \varphi_{\ast} \mathcal{O}_X \to \mathcal{E}^{\vee} \to 0 \, ,
\]
where $\mathcal{E}^{\vee}$ is the so called \emph{Tschirnhausen module},  see \cite{Mir85}. It is a rank $(m-1)$-vector bundle on $Y$. 
Since $X$ and $Y$ are curves over $\mathbb{C}$, the exact sequence splits, i.e. $\varphi_{\ast} \mathcal{O}_X \cong 
\mathcal{O}_Y \oplus \mathcal{E}^{\vee}$. According to \cite[Ex. IV.2.6, p. 306]{Hart77}, $(\det \varphi_{\ast} 
\mathcal{O}_X )^2 \cong \mathcal{O}_Y (-B)$, where $B$ is the branch divisor of the covering. In 
particular, $\deg B = 2(g - 1) - 2m (\gamma - 1)$.

We focus on the case $m = 2$. In such a case $\mathcal{E}$ must be a line bundle on $Y$ and we can assume that 
$\mathcal{E} = \mathcal{O}_Y (E)$ for some divisor $E$ on $Y$. Since 
\[
 \deg B = - \deg (\det \varphi_{\ast} \mathcal{O}_X )^2
	= - \deg (\det ( \mathcal{O}_Y \oplus \mathcal{O}_Y (-E) ) )^2 
	= 2 \deg E
\]
it follows that $\deg E = g - 2\gamma + 1$.

Further we suppose that $E$ is a nonspecial and very ample divisor on $Y$. Denote by $\mathcal{F}$ the rank 2 vector bundle 
$\mathcal{F} := \mathcal{O}_Y \oplus \mathcal{O}_Y (E)$ on $Y$ and let $S$ be the ruled surface $S := \mathbb{P}(\mathcal{F})$ 
with natural projection $f : S \to Y$. Since $\deg E > 0$,
$\mathcal{F}_0 := \mathcal{O}_Y \oplus \mathcal{O}_Y (-E)$ will be the normalization of the vector bundle $\mathcal{F}$. As it 
is decomposable, $f \, : \, S \to Y$ has two canonically determined sections. They are $Y_0$ which corresponds to the 
short exact sequence
\[
    0 \to \mathcal{O}_Y \to \mathcal{O}_Y \oplus \mathcal{O}_Y (-E) \to \mathcal{O}_Y (-E) \to 0 \, ,
\]
and $Y_1$ which corresponds to the short exact sequence
\[
    0 \to \mathcal{O}_Y \to \mathcal{O}_Y \oplus \mathcal{O}_Y (E) \to \mathcal{O}_Y (E) \to 0 \, .
\]
The section $Y_0$ is the section with minimal self-intersection on $S$ and $Y^2_0 = \deg (\mathcal{O}_Y (-E)) = -g + 
2\gamma - 1$. As it well known, $\Pic (S) \cong \mathbb{Z}[Y_0] \oplus f^{\ast} (\Pic (Y))$. For a divisor $D$ on $Y$ 
we will denote by $D \mathfrak{f}$ the divisor $f^{\ast} (D)$ on $S$. Also, we have for the section $Y_1$ that $Y^2_1 = \deg 
(\mathcal{O}_Y (E)) = g - 2\gamma + 1$ and it is not difficult to see that $Y_1 \sim Y_0 + E \mathfrak{f}$. In general, 
cohomologies 
like $h^{j} (S, \mathcal{O}_S (nY_0 + D \mathfrak{f}))$ are calculated using the projection formula, see \cite[Ex. III.8.3, p. 
253]{Hart77}, as
\[
  h^{j} (S, \mathcal{O}_S (nY_0 + D \mathfrak{f})) = h^{j} (Y, \mathcal{S}ym^{n} (\mathcal{F}_0) \otimes \mathcal{O}_Y (D)) \, ,
\]
but since $S$ is decomposable, i.e. $\mathcal{F}_0$ splits, the calculation reduces simply to
\begin{equation}\label{S_decompose_cohomologies}
 h^{j} (S, \mathcal{O}_S (nY_0 + D \mathfrak{f})) = \sum^{n}_{k=0} h^{j} (Y, \mathcal{O}_Y (D - kE)) \, ,
\end{equation}
see for example \cite{FGP05}. From here
\begin{equation}\label{S_cohomologies_O_S(Y_1)}
 h^{0} (S, \mathcal{O}_S (Y_1)) 
      = h^{0} (S, \mathcal{O}_S (Y_0 + E \mathfrak{f})) 
      = h^{0} (Y, \mathcal{O}_Y (E)) + h^{0} (Y, \mathcal{O}_Y) 
      = g - 3\gamma + 3\, .
\end{equation}
Using \cite[Ex. V.2.11 (a), p. 385]{Hart77}, we obtain that the linear series $|\mathcal{O}_S (Y_1))| \equiv 
|\mathcal{O}_S (Y_0 + E \mathfrak{f}))|$ is base point free. Therefore it defines a morphism 
\[
  \Psi := \Psi_{|\mathcal{O}_S (Y_1))|} \, : \, S \to \mathbb{P}^R \, ,
\] 
where $R = g - 3\gamma + 2$. Since $E$ is very ample, it follows by \cite[Proposition 23, p. 38]{FGP05} that $\Psi$ is 
isomorphism away from $Y_0$. Due to $Y_0 \cdot Y_1 = Y_0 \cdot (Y_0 + E \mathfrak{f}) = 0$, the morphism $\Psi$ contracts the 
curve $Y_0$ to 
a point. Therefore $F := \Psi(S) \subset \mathbb{P}^R$ is a cone of degree 
\[
 \deg F = Y_1 \cdot Y_1 = (Y_0 + E \mathfrak{f}) \cdot (Y_0 + E \mathfrak{f}) = \deg E = g - 2\gamma + 1
\]
over the image of a smooth integral curve from the linear series $|\mathcal{O}_S (Y_0 + E \mathfrak{f}))|$. 

By Bertini's theorem, $\Psi$ maps a general element of $|\mathcal{O}_S (Y_1))|$ to a smooth integral curve of genus 
$\gamma$, degree $g - 2\gamma + 1$, which is further linearly normally embedded in some hyperplane $\mathbb{P}^{R-1}$ of 
$\mathbb{P}^{R}$ due to (\ref{S_cohomologies_O_S(Y_1)}). A similar fact is true about 
a general element of $|\mathcal{O}_S (2Y_1))|$. Namely, a general $C \in |\mathcal{O}_S (2Y_1))| \equiv |\mathcal{O}_S 
(2Y_0 + 2E \mathfrak{f}))|$ is mapped by $\Psi$ to a smooth integral curve $\Psi (C)$ of genus $g$, degree $2g - 4\gamma + 2 = 
d$, which is
linearly normal in  $\mathbb{P}^{R}$. Indeed, since $Y_0 \cdot Y_1 = 0$ and $\Psi$ is isomorphism away from $Y_0$, it follows by 
Bertini's theorem that $\Psi (C)$ is smooth and integral. Its degree is $\deg \Psi (C) = 2 Y_1 \cdot Y_1 = 2g - 4\gamma + 2$, 
while by the adjunction formula
\[
 \deg C \cdot (K_S + C) = (2Y_1) \cdot (K_S + 2Y_1)
			= 2(2\gamma - 2) + 2g - 4\gamma + 2
			= 2g - 2
\]
we get that its genus is $g$. Finally, to see that $\Psi (C) \subset \mathbb{P}^{R}$ is linearly normal, consider the exact 
sequence
\[
 0 \to \mathcal{O}_S (-Y_0 - E \mathfrak{f}) \to \mathcal{O}_S (Y_0 + E \mathfrak{f}) \to \mathcal{O}_C (Y_0 + E 
\mathfrak{f}) \to 0 \, .
\]
It is sufficient to see that $h^1 (S, \mathcal{O}_S (-Y_0 - E \mathfrak{f})) = 0$, which is not difficult to obtain using the 
Serre duality.

The arguments above motivate the following statement.

\begin{prop}\label{Prop_ruled_surface} 
Assume that  $Y$ is a smooth curve of genus $\gamma$ and $E$ is a very ample non-special divisor on $Y$ of degree $e$. Let $S 
:= \mathbb{P}(\mathcal{O}_Y \oplus \mathcal{O}_Y (-E))$, $Y_0$ be the section of minimal self-intersection of the 
natural projection $f: S \to Y$ and $Y_1 \in |\mathcal{O}_S (Y_0 + E \mathfrak{f})|$ be a smooth integral 
curve. Let $\Psi := \Psi_{|\mathcal{O}_S (Y_0 + E \mathfrak{f})|}$ be the morphism induced by the complete linear series 
$|\mathcal{O}_S (Y_0 + E \mathfrak{f})|$. Then:
\begin{enumerate}
  \item[{\rm (a)}]  $|\mathcal{O}_S (Y_0 + E \mathfrak{f})|$ is base point free and of dimension $e - \gamma + 1$;
  \item[{\rm (b)}]  $\Psi$ is an isomorphism away from $Y_0$ and contracts $Y_0$ to a point in $\mathbb{P}^{e - \gamma +1} $, 
in particular, $\Psi (S)$ is a cone over $\Psi (Y_1)$;
  \item[{\rm (c)}]  for a general $C \in |\mathcal{O}_S (2Y_0 + 2E \mathfrak{f})|$
    \begin{itemize}
     \item[{\rm (c.1)}] $\Psi (C)$ is a linearly normal smooth irreducible curve of genus $2\gamma + e - 1$ and degree $2e$ in 
$\mathbb{P}^{e - \gamma + 1}$;
     \item[{\rm (c.2)}] the linear series $|\mathcal{O}_C (R_{\varphi})|$ on $C$ is traced by the linear series $|\mathcal{O}_S 
(Y_0 + E \mathfrak{f})|$ on $S$, 
where  $R_{\varphi}$ is the ramification divisor of the morphism $\varphi \, : \, C \to Y$ induced by the ruling of 
$S$.
    \end{itemize}
\end{enumerate}
\end{prop}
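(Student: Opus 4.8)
The plan is to carry out, for the general degree $e=\deg E$, the same computations that occur in the discussion preceding the proposition; everything reduces to intersection theory on $S$, one application of Bertini's theorem and the cohomology formula \eqref{S_decompose_cohomologies}, together with the hypotheses that $E$ is very ample and non-special. For part~(a), I would read off from \eqref{S_decompose_cohomologies} that $h^{0}(S,\mathcal{O}_S(Y_0+E\mathfrak{f}))=h^{0}(Y,\mathcal{O}_Y(E))+h^{0}(Y,\mathcal{O}_Y)=(e-\gamma+1)+1$ (using that $E$ is non-special), so $\dim|\mathcal{O}_S(Y_0+E\mathfrak{f})|=e-\gamma+1$. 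For base point freeness one can cite \cite[Ex. V.2.11 (a), p. 385]{Hart77}, or argue directly: away from $Y_0$ the subsystem $\{\,Y_0+f^{\ast}D : D\in|E|\,\}$ already has no base points because $|E|$ is base point free ($E$ being very ample); and since $N_{Y_0/S}\cong\mathcal{O}_Y(-E)$ while $(E\mathfrak{f})|_{Y_0}\cong\mathcal{O}_Y(E)$ (via the isomorphism $f|_{Y_0}$), one has $\mathcal{O}_S(Y_0+E\mathfrak{f})|_{Y_0}\cong\mathcal{O}_{Y_0}$, so that the restriction $H^{0}(S,\mathcal{O}_S(Y_0+E\mathfrak{f}))\to H^{0}(Y_0,\mathcal{O}_{Y_0})$ is onto (because $h^{1}(S,\mathcal{O}_S(E\mathfrak{f}))=h^{1}(Y,\mathcal{O}_Y(E))=0$), and hence some section vanishes nowhere on $Y_0$.

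For part~(b), since $E$ is very ample, \cite[Proposition 23, p. 38]{FGP05} gives that $\Psi$ is an isomorphism on $S\setminus Y_0$; the triviality of $\mathcal{O}_S(Y_0+E\mathfrak{f})|_{Y_0}$ from (a) forces $\Psi|_{Y_0}$ to be constant, so $\Psi$ contracts $Y_0$ to a point $v\in\mathbb{P}^{e-\gamma+1}$, and $F=\Psi(S)$ is the cone with vertex $v$ over $\Psi(Y_1)$.

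For part~(c), take a general $C\in|\mathcal{O}_S(2Y_0+2E\mathfrak{f})|$. Since $C\cdot Y_0=(2Y_0+2E\mathfrak{f})\cdot Y_0=0$ and the members containing $Y_0$ form a hyperplane in the system (by \eqref{S_decompose_cohomologies}, $h^{0}(S,\mathcal{O}_S(2Y_0+2E\mathfrak{f}))=h^{0}(S,\mathcal{O}_S(Y_0+2E\mathfrak{f}))+1$), a general $C$ is disjoint from $Y_0$; base point freeness of $|\mathcal{O}_S(2Y_0+2E\mathfrak{f})|$ (which follows from that of $|\mathcal{O}_S(Y_0+E\mathfrak{f})|$) and Bertini then make $C$ smooth, and irreducible because $(2Y_0+2E\mathfrak{f})^{2}=4e>0$, so the system is not composed with a pencil. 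Hence $\Psi|_C\colon C\xrightarrow{\sim}\Psi(C)$, so $\deg\Psi(C)=C\cdot Y_1=2\,Y_1^{2}=2e$; adjunction on $S$ gives $2g_C-2=C\cdot(C+K_S)=2(K_S+Y_1)\cdot Y_1+2\,Y_1^{2}=2(2\gamma-2)+2e$, i.e.\ the genus is $2\gamma+e-1$; and from $0\to\mathcal{O}_S(-Y_0-E\mathfrak{f})\to\mathcal{O}_S(Y_0+E\mathfrak{f})\to\mathcal{O}_C\big((Y_0+E\mathfrak{f})|_C\big)\to 0$, together with $H^{i}(S,\mathcal{O}_S(-Y_0-E\mathfrak{f}))=0$ for all $i$ (since $Rf_{\ast}\mathcal{O}_S(-Y_0)=0$, $\mathcal{O}_S(-Y_0)$ restricting to $\mathcal{O}(-1)$ on every fibre), one gets that the restriction $H^{0}(S,\mathcal{O}_S(Y_0+E\mathfrak{f}))\to H^{0}\big(C,\mathcal{O}_C((Y_0+E\mathfrak{f})|_C)\big)$ is an isomorphism, which gives non-degeneracy and linear normality of $\Psi(C)$ in $\mathbb{P}^{e-\gamma+1}$ and hence (c.1). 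For (c.2), $\varphi:=f|_C\colon C\to Y$ is a $2:1$ cover (as $C\sim 2Y_1$ and $Y_1$ is a section); adjunction gives $K_C=(K_S+C)|_C=\big((K_Y+E)\mathfrak{f}\big)|_C=\varphi^{\ast}(K_Y+E)$ (using $K_S\sim -2Y_0+(K_Y-E)\mathfrak{f}$), so Riemann--Hurwitz yields $R_\varphi=K_C-\varphi^{\ast}K_Y=\varphi^{\ast}E$; and since $C\cap Y_0=\varnothing$ we have $(Y_0+E\mathfrak{f})|_C=(E\mathfrak{f})|_C=\varphi^{\ast}E=\mathcal{O}_C(R_\varphi)$. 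Combined with the surjectivity above, the linear series traced on $C$ by $|\mathcal{O}_S(Y_0+E\mathfrak{f})|$ is precisely the complete series $|\mathcal{O}_C(R_\varphi)|$ (consistently, the projection formula and $\varphi_{\ast}\mathcal{O}_C\cong\mathcal{O}_Y\oplus\mathcal{O}_Y(-E)$ give $h^{0}(C,\mathcal{O}_C(R_\varphi))=h^{0}(Y,\mathcal{O}_Y(E))+h^{0}(Y,\mathcal{O}_Y)=e-\gamma+2$).

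None of these steps is genuinely hard; the one requiring the most care is the last, i.e.\ identifying the restricted line bundle $(Y_0+E\mathfrak{f})|_C$ with $\mathcal{O}_C(R_\varphi)$ and matching the relevant spaces of sections, since that is what makes (c.2) — and with it the geometric meaning of $\Psi|_C$ — precise.
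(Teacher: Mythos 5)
Your proposal is correct and follows essentially the same route as the paper: the dimension count via \eqref{S_decompose_cohomologies}, base point freeness and the contraction of $Y_0$ (via $Y_0\cdot Y_1=0$), Bertini plus adjunction for (c.1), the restriction sequence $0\to\mathcal{O}_S(-Y_0-E\mathfrak{f})\to\mathcal{O}_S(Y_0+E\mathfrak{f})\to\mathcal{O}_C((Y_0+E\mathfrak{f})|_C)\to 0$ for linear normality, and the identification $R_\varphi\sim(Y_0+E\mathfrak{f})|_C$ for (c.2). Your minor variants (the direct base-point argument on $Y_0$, the vanishing of $H^i(S,\mathcal{O}_S(-Y_0-E\mathfrak{f}))$ via relative degree $-1$ instead of Serre duality, and deducing completeness of the traced series from that same sequence rather than from (c.1)) are all sound and interchangeable with the paper's steps.
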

\begin{proof}
Statements {\rm (a)}, {\rm (b)} and {\rm (c.1)} are obtained by very similar arguments like those in the discussion preceding 
the proposition.  We only need to check {\rm (c.2)}.
Recall that $K_S \sim -2Y_0 + (K_Y - E) \mathfrak{f}$. On $C$ we have $K_C - \varphi^{\ast} K_{Y} \sim R_{\varphi}$, i.e. 
$\mathcal{O}_C (R_{\varphi}) = \omega_C \otimes (\varphi^{\ast} \omega_{Y})^{\vee}$. The canonical divisor $K_C$ on $C$ 
is induced by the restriction of $K_S + C \sim K_S + (2Y_0 + 2E \mathfrak{f})$ on $C$. Similarly, the restriction of $K_S + 
Y_1 \sim K_S 
+ Y_0 + E \mathfrak{f}$ on $Y_1$ induces $K_{Y_1}$. Therefore
\[
 R_{\varphi} \sim (K_S + (2Y_0 + 2E \mathfrak{f}) - (K_S + Y_0 + E \mathfrak{f}))_{|_C} \sim (Y_0 + E \mathfrak{f})_{|_C} \, .
\]
By {\rm (a)} and {\rm (c.1)},
$
  h^0 (C, \mathcal{O}_C ({Y_0 + E \mathfrak{f}}_{|_C})) = h^0 (S, \mathcal{O}_S (Y_0 + E \mathfrak{f})) = e - \gamma + 2 \, 
$.
Therefore the linear series $|\mathcal{O}_S (Y_0 + E \mathfrak{f})|$ on $S$ induces the linear series $|\mathcal{O}_C 
(R_{\varphi})|$ on 
$C$. 
\end{proof}

\begin{remark}
When $e = g - 2\gamma + 1 \geq 2\gamma - 1$ and the divisor $E$ on $Y$ is very ample, where $\mathcal{O}_Y (-E)$ is the 
Tschirnhausen module of a double covering $X \to Y$, statement {\rm (c.2)} implies that $\mathcal{O}_C (R_{\varphi})$ is very 
ample and $h^0 (C, \mathcal{O}_C (R_{\varphi})) = g - 3\gamma + 3$. It improves a similar claim proved in \cite[Lemma 
4.1]{CIK17} where it was assumed that $g \geq 6\gamma - 1$.
\end{remark}

\begin{remark}\label{Section_2_Remark_Fam_counting}
{\rm Proposition \ref{Prop_ruled_surface}} suggests how to give an alternative construction of the component $\mathcal{D}_{2g - 
4\gamma + 2, g, r}$ constructed in \cite[Theorem 4.3, p. 594]{CIK17}. For this take $e = g - 2\gamma + 1 \geq 2\gamma - 1$ and 
consider the family $\mathcal{Z}$ of surface scrolls $F \subset \mathbb{P}^R$, over a curve $Y$ of genus $\gamma$, $\deg F = 
\deg Y = e = g - 2\gamma + 1$ with $h^0 (F, \mathcal{O}_F (1)) = g - 3\gamma + 3$ and $h^1 (F, \mathcal{O}_F (1)) = 
\gamma$. According to \cite[Lemma 1, p. 7]{CCFM2008} such a scroll is necessarily a cone, say $F$, over a projectively normal 
curve in $\mathbb{P}^{R-1}$ of genus $\gamma$ and degree $e$. Further, let $\mathcal{F}$ be the family of smooth curves in 
$|\mathcal{O}_F (2)|$ on the cones $F \subset \mathbb{P}^{R}$ from the family $\mathcal{Z}$. By a counting of the parameters on 
which the family 
$\mathcal{Z}$ depends, similar to the one carried out in \cite[Remark 2, p. 15]{CCFM2008} and \cite[Proposition 7.1, p. 
150]{CCFM2009}, 
\begin{itemize}
 \item[$\phantom{.}$] $\dim \mathcal{Z} = $
    \begin{itemize}
     \item[$ + $] $3\gamma - 3$ \ : \ number of parameters of curves $Y \in \mathcal{M}_{ \gamma }$ 
     \item[$ + $] $\gamma$ \ : \ number of parameters of line bundles $\mathcal{O}_Y (E) \in \Pic (Y)$ of degree $g - 2\gamma + 
1 \geq 2\gamma - 1$ necessary to fix the geometrically ruled surface $\mathbb{P} (\mathcal{O}_Y \oplus \mathcal{O}_Y (-E))$
     \item[$ + $] $(R+1)^2 - 1 = \dim (\Aut (\mathbb{P}^R))$
     \item[$ - $] $((g - 2\gamma + 1) - \gamma + 2) = \dim G_F$, where $G_F$ is the subgroup of $\Aut 
(\mathbb{P}^R)$ fixing the scroll $F$, see \cite[Lemma 6.4, p. 148]{CCFM2009}
    \end{itemize}
\end{itemize}
one finds that 
$\dim \mathcal{Z} = 7(\gamma-1) - g + (R+1)^2$. On the other hand computing $\dim |\mathcal{O}_F 
(2)|$ using (\ref{S_decompose_cohomologies}) and the Riemann-Roch formula, we get easily $\dim |\mathcal{O}_F (2)| = 3g - 
8\gamma + 5$. Therefore for the dimension of $\mathcal{F}$ we obtain
\[
  \dim \mathcal{F} = \dim \mathcal{Z} + \dim |\mathcal{O}_F (2)| = 2g - \gamma - 2 + (g - 3\gamma + 3)^2.
\]
It is precisely the dimension of the component $\mathcal{D}_{2g-4\gamma+2, g, r}$ constructed in \cite[Theorem 4.3]{CIK17} 
when $r = R = g - 3\gamma + 2$ and it improves the bound calculated in \cite[Lemma 
4.1]{CIK17} where it was assumed that $g \geq 6\gamma - 1$.
\end{remark}

The above arguments do not imply yet that the family $\mathcal{F}$ gives rise to a component of the Hilbert scheme 
$\mathcal{I}_{d , g, R}$. To prove this formally, we will compute in section \ref{Section_4} $h^0 (C, N_{C / \mathbb{P}^R})$ for 
a general $C \in \mathcal{F}$. For the purposes of that computation we need several more formal statements about the normal 
bundles of curves on cones, which we prove below.

\begin{lemma}\label{Lemma_ConeProjNormalB}
Let $X$ be a smooth non-degenerate curve in $\mathbb{P}^{r}$ and let
$H$ be a hyperplane in $\mathbb{P}^{r}$. \, Assume that $\pi_p:X\to H \subset \mathbb{P}^{r}$ is a projection from a point $p\notin H\cup X$ such that the image $Y:=\pi_p(X)$ is smooth in $\mathbb{P}^{r-1}$. Then
\begin{equation}\label{ConeProjNormalB_SES}
  0 \to O_X(R_{\pi_p})\otimes\mathcal{O}_X (1) \to N_{X / \mathbb{P}^{r}} \to \pi_p^{\ast} N_{Y / \mathbb{P}^{r-1}} \to 0 \, ,
\end{equation}
where $R_{\pi_p}$ is the ramification divisor of the 
covering $\pi_p : X \to Y$.
\end{lemma}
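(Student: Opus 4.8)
The plan is to produce the short exact sequence by comparing the projection morphism $\pi_p : X \to Y$ with the projection of projective spaces from the point $p$. Let $\overline{\pi}_p : \mathbb{P}^r \setminus \{p\} \to H \cong \mathbb{P}^{r-1}$ be the linear projection, so that $\pi_p = \overline{\pi}_p|_X$. Since $p \notin X$, this restriction is a morphism on all of $X$; since $p \notin H$, $H$ is a section of the blow-up of $\mathbb{P}^r$ at $p$, and $Y = \overline{\pi}_p(X) \subset H$ is smooth by hypothesis. The key geometric input is that $\overline{\pi}_p$ realizes $\mathbb{P}^r \setminus \{p\}$ as (an open subset of) the total space of the line bundle $\mathcal{O}_{\mathbb{P}^{r-1}}(1)$ over $H$, so that along $Y$ one has a short exact sequence of normal-type bundles relating $N_{X/\mathbb{P}^r}$ to $\pi_p^{\ast} N_{Y/\mathbb{P}^{r-1}}$.

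More concretely, I would argue as follows. First, consider the commutative diagram of morphisms $X \xrightarrow{\pi_p} Y \hookrightarrow \mathbb{P}^{r-1}$ and $X \hookrightarrow \mathbb{P}^r$. Pulling back the normal bundle sequence of $Y$ in $\mathbb{P}^{r-1}$ along $\pi_p$ and comparing with the normal bundle sequence of $X$ in $\mathbb{P}^r$, the snake-type comparison reduces the problem to identifying the kernel of the natural surjection $N_{X/\mathbb{P}^r} \to \pi_p^{\ast} N_{Y/\mathbb{P}^{r-1}}$. That kernel is the normal bundle of $X$ relative to the projection, i.e. the "vertical" part: it is the conormal-dual of the relative cotangent data of $\overline{\pi}_p$ restricted to $X$. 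Because the fibres of $\overline{\pi}_p$ are the lines through $p$, the relative tangent sheaf of $\overline{\pi}_p$ twisted appropriately is $\mathcal{O}_{\mathbb{P}^r}(1)$ away from $p$; restricting to $X$ and accounting for the ramification of $\pi_p : X \to Y$ — which is exactly where $X$ is tangent to a fibre line through $p$ — introduces the twist by $\mathcal{O}_X(R_{\pi_p})$. Thus the kernel is $\mathcal{O}_X(R_{\pi_p}) \otimes \mathcal{O}_X(1)$, giving (\ref{ConeProjNormalB_SES}).

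Alternatively, and perhaps cleaner, I would work on the cone. Let $F := \overline{\overline{\pi}_p^{-1}(Y)} \cup \{p\}$ be the cone over $Y$ with vertex $p$; it contains $X$ and its desingularization is the ruled surface $S = \mathbb{P}(\mathcal{O}_Y \oplus \mathcal{O}_Y(1))$ with $X$ pulled back to a smooth curve $C$ disjoint from the negative section $Y_0$. On $S$ one has the standard sequence $0 \to N_{C/S} \to N_{C/\mathbb{P}^r}|_{S\text{-part}} \to N_{S/\mathbb{P}^r}|_C \to 0$ together with the identification $N_{S/\mathbb{P}^r}|_C \cong \pi_p^{\ast} N_{Y/\mathbb{P}^{r-1}}$ coming from the contraction $S \to F$ being an isomorphism near $C$ and the scroll being a cone. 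Then $N_{C/S} = \mathcal{O}_C(C)$, and using $C \sim 2 Y_1$-type relations and the adjunction/ramification computation already carried out in Proposition~\ref{Prop_ruled_surface}(c.2) — namely $R_\varphi \sim (Y_0 + E\mathfrak{f})|_C$, which in the present normalization reads $R_{\pi_p} \sim (Y_0 + \mathcal{O}_Y(1)\mathfrak{f})|_X$ — one identifies $\mathcal{O}_C(C)$ with $\mathcal{O}_X(R_{\pi_p}) \otimes \mathcal{O}_X(1)$, since $\mathcal{O}_X(1)$ is traced on $X$ by $|Y_1| = |Y_0 + \mathcal{O}_Y(1)\mathfrak{f}|$ and $C|_C = 2Y_1|_C$. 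Exactness at the ends then follows from the standard normal bundle sequences.

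The main obstacle I expect is making rigorous the identification of the "vertical" kernel bundle with $\mathcal{O}_X(R_{\pi_p}) \otimes \mathcal{O}_X(1)$, i.e. correctly bookkeeping the ramification twist. The conceptual point is that $\pi_p : X \to Y$ ramifies precisely where the tangent line to $X$ passes through the vertex $p$, so the derivative of the projection along $X$ vanishes to the right order there; translating this into a degree count on line bundles — rather than leaving it as a local statement — is the step that requires care. The cone/ruled-surface approach sidesteps most of this by importing the linear-equivalence computation from Proposition~\ref{Prop_ruled_surface}, so that is the route I would ultimately take.
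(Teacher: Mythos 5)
Your first sketch is essentially the paper's argument, but you stop exactly where the content lies. The paper pulls back the Euler sequence of $\mathbb{P}^{r-1}$ along $\pi_p$ and compares it with the Euler sequence of $\mathbb{P}^r$ restricted to $X$; the snake lemma gives $0 \to \mathcal{O}_X(1) \to T_{\mathbb{P}^r}|_X \to \pi_p^{\ast}(T_{\mathbb{P}^{r-1}}|_Y) \to 0$. It then compares the two normal bundle sequences: the vertical map $T_X \to \pi_p^{\ast}T_Y$ is injective with cokernel $\mathcal{O}_{R_{\pi_p}}$, and a second application of the snake lemma yields $0 \to \mathcal{O}_X(1) \to \ker\beta \to \mathcal{O}_{R_{\pi_p}} \to 0$, whence $\ker\beta \cong \mathcal{O}_X(R_{\pi_p})\otimes\mathcal{O}_X(1)$ because $\ker\beta$ is a line bundle. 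This is precisely the ``bookkeeping of the ramification twist'' that you defer: the ramification does not enter through the relative tangent sheaf of the projection (whose restriction to $X$ is just $\mathcal{O}_X(1)$), but through the cokernel of $d\pi_p$, and without that second diagram your first route is not a proof.

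The route you say you would actually take has a genuine gap. You identify $N_{C/S}=\mathcal{O}_C(C)$ with $\mathcal{O}_X(R_{\pi_p})\otimes\mathcal{O}_X(1)$ by importing $R_{\varphi}\sim (Y_0+E\mathfrak{f})|_C$ and $C|_C = 2Y_1|_C$ from Proposition~\ref{Prop_ruled_surface}(c.2), but those relations presuppose $C\in|2Y_1|$, i.e.\ that $X$ is cut on the cone by a quadric. The lemma makes no such assumption: $X$ is any smooth curve with smooth projection, so on $S$ its class is $mY_0 + D\mathfrak{f}$ with $m=\deg\pi_p$ and $D$ an arbitrary divisor class of the right degree, and $C|_C$ need not be $2Y_1|_C$ (nor $mY_1|_C$). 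As written, your argument proves only the special case relevant to Corollaries~\ref{Corollary_ConeProjNormalB_1} and \ref{Corollary_ConeProjNormalB_2}, not the stated lemma. The cone route can be repaired without that hypothesis: since $C\cdot Y_0=0$ one has $\mathcal{O}_C((Y_0+E\mathfrak{f})|_C)=\mathcal{O}_X(1)$, and adjunction on $S$ with $K_S\sim -2Y_0+(K_Y-E)\mathfrak{f}$ gives $\mathcal{O}_C(C)=\omega_C\otimes\mathcal{O}_C(-K_S|_C)=\omega_C\otimes\varphi^{\ast}\omega_Y^{\vee}\otimes\mathcal{O}_X(1)=\mathcal{O}_X(R_{\pi_p})\otimes\mathcal{O}_X(1)$ for any such $C$. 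You would also need to actually justify $N_{F/\mathbb{P}^r}|_X\cong \pi_p^{\ast}N_{Y/\mathbb{P}^{r-1}}$, which you assert only by analogy; a clean way is to note that $F\setminus\{p\}$ is the preimage of $Y$ under the smooth morphism $\mathbb{P}^r\setminus\{p\}\to\mathbb{P}^{r-1}$, so its normal bundle is the pullback of $N_{Y/\mathbb{P}^{r-1}}$. With these two repairs your second route becomes a valid alternative to the paper's Euler-sequence/snake-lemma proof; as it stands, neither of your routes is complete.
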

\begin{proof}
Since $\pi_p:X\to \mathbb{P}^{r-1} \subset \mathbb{P}^{r}$ is a projection from a point $p\notin X$, we have $\pi_p^{\ast} 
(\mathcal O_Y(1))=\mathcal O_X(1)$. For the curves $X$ and $Y$ we have the Euler sequences
\[
     0 \to \mathcal{O}_X \to \oplus^{r+1} \mathcal{O}_X (1) \to T_{\mathbb{P}^r |_X} \to 0 \, 
\]
and 
\[
     0 \to \mathcal{O}_Y\to  \oplus^{r} \mathcal{O}_Y (1) \to T_{\mathbb{P}^{r-1} |_Y} \to 0 \, 
\]

\noindent Pulling the second sequence to $X$ via $\pi_p$ we obtain

{\small
\begin{equation*}\label{CommDiag_T_X_and_T_X}
\begin{array}{ccccccccccccccccccccccc}
  & & & & 0 & & 0 & & \\[1ex]
  & & & & \downarrow & & \downarrow & & \\[1ex]
  & &  & & \mathcal{O}_X(1) & \simeq  & \ker(\alpha) & &  \\[1ex]
  & & & & \downarrow & & \downarrow & \\[1ex]
  0 & \lra & \mathcal{O}_{X} & \rightarrow &  \oplus^{r+1}  \mathcal{O}_X (1) & \rightarrow & T_{\mathbb{P}^r |_X} &  
\rightarrow & 0 \\[1ex]
  & & \downarrow & & \downarrow & & \downarrow \alpha & \\[1ex]
  0 & \lra 	& \mathcal{O}_{X} & \rightarrow & \pi^{\ast}_p \left( \oplus^{r}_{1} \mathcal{O}_{Y}(1) \right) 
& \rightarrow & \pi^{\ast}_p \left( T_{\mathbb{P}^{r-1} |_{Y}}\right)   & \rightarrow & 0 \\[1ex]
  & &  & & \downarrow & &  & \\[1ex]
  & & & & 0 & &  & & \\[1ex]
\end{array}
\end{equation*}
}
\noindent where $\alpha$ is the induced map between the restrictions of $T_{\mathbb{P}^{r} |_X}$ and 
$\pi^{\ast}_p \left( T_{\mathbb{P}^{r-1} |_{Y}} \right)$ and $\ker \left( \alpha \right)$ is its kernel. By the Snake lemma we 
obtain
\[
 0 	\to \mathcal{O}_{X} (1) 
	\to T_{\mathbb{P}^r |_X}
	\to  \pi^{\ast}_p \left( T_{\mathbb{P}^{r-1} |_{Y}}\right)    
	\to 0 \, .
\]
Further, using the normal bundle sequence for $N_{X / \mathbb{P}^r}$ and $N_{{Y} / \mathbb{P}^{r-1}}$, 
we get the following commutative diagram
\begin{equation*}\label{CommDiag_N_X_and_N_X}
\begin{array}{ccccccccccccccccccccccc}
  & & & & 0 & & 0 & & \\[1ex]
  & & & & \downarrow & & \downarrow & & \\[1ex]
  & &  & & \mathcal{O}_X(1)  &  & \ker(\beta) & &  \\[1ex]
  & & & & \downarrow & & \downarrow & \\[1ex]
  0 & \lra & T_{X} & \rightarrow & T_{\mathbb{P}^r |_X} & \rightarrow & N_{X / \mathbb{P}^r} & \rightarrow & 0 \\[1ex]
  & & \downarrow & & \downarrow & & \downarrow \beta & \\[1ex]
  
  0  &	\to  & \pi^{\ast}_p(T_{Y}) 
 &	\to  & \pi^{\ast}_p \left( {T_{\mathbb{P}^{r-1}}}_{|_{Y}} \right) 
 &	\to  & \pi^{\ast}_p \left( N_{Y / \mathbb{P}^{r-1}} \right)
 &	\to  & 0
\\[1ex]
    & & \downarrow & & \downarrow & &  & \\[1ex]
  & &  \mathcal{O}_{R_{\pi_p}}  & & 0 & &  & & \\[1ex]
  & & \downarrow & & & & & \\[1ex]
  & & 0 & & & & & \\[1ex]
\end{array}
\end{equation*}
\noindent where $\beta$ is the induced map between the normal bundles $N_{X / \mathbb{P}^r}$ and $\pi^{\ast}_p \left( N_{Y / 
\mathbb{P}^{r-1}} \right)$. Similarly as before, by the Snake lemma we get $\ker \beta \cong \mathcal{O}_{X} 
(R_{\pi_p}) \otimes \mathcal{O}_X(1)$, and thus we 
deduce the short exact sequence \color{black}
\[
  0   \to \mathcal{O}_{X} (R_{\pi_p}) \otimes \mathcal{O}_X(1)
      \to N_{X / \mathbb{P}^r} \to \pi^{\ast}_{p} N_{Y / \mathbb{P}^{r-1}} \to 0
\]

\end{proof}

\begin{coro}\label{Corollary_ConeProjNormalB_1}
Suppose that $Y \subset \mathbb{P}^{r-1} \subset \mathbb{P}^{r}$, $r \geq 3$, is a smooth non-degenerate curve of genus 
$\gamma$. Let $p \in \mathbb{P}^{r} \setminus \mathbb{P}^{r-1}$ be and arbitrary point.  Consider the cone $F \subset 
\mathbb{P}^{r}$ over $Y$ with vertex $p$. Suppose that a curve $X \subset F$ is cut by a general hypersurface  
$Q_m \subset \mathbb{P}^{R}$ of degree $m$, i.e. $X \in 
|\mathcal{O}_F (m)|$ is general. Let $\varphi \, : \, X \xrightarrow{m:1} Y$ be the $m$-sheeted covering map induced by the 
ruling of the cone. Then there is an exact sequence 
\begin{equation}\label{ConeProjNormalB_SES_1}
  0 \to \mathcal{O}_X (m) \to N_{X / \mathbb{P}^{r}} \to \varphi^{\ast} N_{Y / \mathbb{P}^{r-1}} \to 0 \, .
\end{equation}
\end{coro}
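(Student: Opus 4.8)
The plan is to reduce this to Lemma~\ref{Lemma_ConeProjNormalB}, applied to the linear projection $\pi_p$ from the vertex $p$ of the cone $F$. First I would check that a general $X\in|\mathcal{O}_F(m)|$ satisfies the hypotheses of that lemma. Since $\mathcal{O}_F(m)=\mathcal{O}_{\mathbb{P}^r}(m)|_F$ and a general degree-$m$ hypersurface $Q_m$ avoids $p$, the vertex is not a base point of $|\mathcal{O}_F(m)|$, so $p\notin X$; by hypothesis $p\notin\mathbb{P}^{r-1}$ as well. By Bertini together with the smoothness of $F\setminus\{p\}$, a general such $X$ is smooth, and it is non-degenerate in $\mathbb{P}^r$: for $m\geq 2$ one has $H^0(F,\mathcal{O}_F(1-m))=0$, so the restriction map $H^0(\mathbb{P}^r,\mathcal{O}(1))\to H^0(F,\mathcal{O}_F(1))\to H^0(X,\mathcal{O}_X(1))$ is injective and hence no hyperplane contains $X$. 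Moreover $X$ meets every ruling of the cone in $m>0$ points, so $\pi_p(X)=Y$, and $\pi_p|_X$ is exactly the covering $\varphi:X\to Y$ induced by the ruling. Lemma~\ref{Lemma_ConeProjNormalB} then provides the exact sequence
\[
  0 \to \mathcal{O}_X(R_{\varphi})\otimes\mathcal{O}_X(1) \to N_{X/\mathbb{P}^r} \to \varphi^{\ast} N_{Y/\mathbb{P}^{r-1}} \to 0 \, .
\]

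It remains to identify the kernel with $\mathcal{O}_X(m)$, that is, to show $\mathcal{O}_X(R_{\varphi})\cong\varphi^{\ast}\mathcal{O}_Y(m-1)$. For this I would pass to the resolution of the cone, namely the ruled surface $S:=\mathbb{P}(\mathcal{O}_Y\oplus\mathcal{O}_Y(-E))$ with $\mathcal{O}_Y(E)=\mathcal{O}_Y(1)$ and $\deg E=\deg Y$, exactly as in Section~\ref{Section_2}: there $Y_1\sim Y_0+E\mathfrak{f}$ is contracted to $p$ and the general $X$ corresponds to a curve $C\in|\mathcal{O}_S(mY_1)|=|\mathcal{O}_S(mY_0+mE\mathfrak{f})|$ mapped isomorphically onto $X$. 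Because $C\cdot Y_0=m(Y_0+E\mathfrak{f})\cdot Y_0=0$, the curves $C$ and $Y_0$ are disjoint, so $\mathcal{O}_S(Y_0)|_C\cong\mathcal{O}_C$; in particular $\mathcal{O}_X(1)=\mathcal{O}_S(Y_1)|_C=(E\mathfrak{f})|_C=\varphi^{\ast}\mathcal{O}_Y(1)$. Now $K_S\sim -2Y_0+(K_Y-E)\mathfrak{f}$, so by adjunction on $S$ as in the proof of Proposition~\ref{Prop_ruled_surface},
\[
  R_{\varphi} \sim \left(K_S+C-K_Y\mathfrak{f}\right)\big|_C \sim \left((m-2)Y_0+(m-1)E\mathfrak{f}\right)\big|_C \sim (m-1)\,\varphi^{\ast}\mathcal{O}_Y(1) \, ,
\]
again using $Y_0|_C\cong\mathcal{O}_C$. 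Combining the last two displays gives $\mathcal{O}_X(R_{\varphi})\otimes\mathcal{O}_X(1)\cong\varphi^{\ast}\mathcal{O}_Y(m)=\mathcal{O}_X(m)$, and substitution into the sequence above yields (\ref{ConeProjNormalB_SES_1}).

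The bulk of the argument is this last line-bundle identification; the rest is just verifying that projection from the vertex literally realizes $\varphi$ so that Lemma~\ref{Lemma_ConeProjNormalB} applies. The point to watch is that $X$ must be smooth, non-degenerate and disjoint from $p$, which is where the genericity of $X$ (and the standing hypothesis $m\geq 2$, the only case used later) is needed. As an alternative to the ruled-surface computation, one can compute $R_{\varphi}$ directly on the rulings: over $y\in Y$ the fiber $\varphi^{-1}(y)=X\cap\ell_y$ is the degree-$m$ divisor cut on $\ell_y\cong\mathbb{P}^1$ by $Q_m$, and comparing these divisors as $y$ varies again gives $\mathcal{O}_X(R_{\varphi})\cong\varphi^{\ast}\mathcal{O}_Y(m-1)$; the surface computation is just cleaner to write down.
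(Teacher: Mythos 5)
Your proposal is correct and follows essentially the paper's own route: both arguments reduce to Lemma \ref{Lemma_ConeProjNormalB} and then identify $\mathcal{O}_X(R_{\varphi})\cong\mathcal{O}_X(m-1)$ by adjunction --- the paper does this directly on the cone via $R_{\varphi}\sim\left((K_F+X)-(K_F+Y)\right)|_X\sim(m-1)Y|_X$, while you carry out the equivalent computation on the resolution $S$ (and, helpfully, spell out the verification that $p\notin X$, $X$ is smooth and non-degenerate, and $\pi_p|_X=\varphi$, which the paper leaves implicit). One harmless slip: it is $Y_0$, not $Y_1$, that is contracted to the vertex $p$; your computation correctly uses $C\cdot Y_0=0$, so nothing else is affected.
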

\begin{proof}
The line bundle $O_X(R_\varphi)$ associated to the ramification divisor $R_{\varphi}$ of the covering $\varphi : X \to Y$ has 
the property $\mathcal O_X(R_\varphi) \simeq \mathcal O_X (m-1)$. To see this, recall that $R_{\varphi} \sim K_X - 
\varphi^{\ast} K_Y$. The canonical divisor $K_X$ on $X$ is cut by the restriction of $K_F + X$ on $X$ and $K_Y$ is cut by the 
restriction of $K_F + Y$ 
on $Y$. Therefore 
\[
  K_X - \varphi^{\ast}K_Y=(K_F + X)|_X - (K_F + Y)|_X \sim (X - Y)|_X \sim (m-1) Y|_X \, .
\]
Hence  $\mathcal O_X(R_\varphi)\simeq \mathcal O_X(m-1)$ and {\rm Lemma \ref{Lemma_ConeProjNormalB}} yields the exact sequence 
\eqref{ConeProjNormalB_SES_1}.
\end{proof}


\begin{coro}\label{Corollary_ConeProjNormalB_2}
Let $X, Y \subset F \subset \mathbb{P}^r$ be smooth curves on the cone $F$ with vertex $p$ as in {\rm Corollary 
\ref{Corollary_ConeProjNormalB_1}}, where $r \geq 6$. Let $W \subset \mathbb{P}^r$ be a general projective subspace of 
$\mathbb{P}^r$ of dimension $r - s - 1$, where \, $5 \leq s \leq r-1$. Consider the projection 
$
  \pi_{W} : \mathbb{P}^{r} \setminus W \to \mathbb{P}^{s}
$ 
with center $W$ to a general projective subspace of $\mathbb{P}^r$ of dimension $s$. Denote by $X_s$, $Y_s$ and $F_s$ the 
images of $X$, $Y$ and $F$ under $\pi_{W}$. Let 
$
  \varphi_s \, : \, X_s \to Y_s
$
be the covering map induced by the ruling of $F_s$. Then
\begin{equation}\label{ConeProjNormalB_SES_2}
  0 \to \mathcal{O}_{X_s} (m) \to N_{X_s / \mathbb{P}^{s}} \to \varphi^{\ast}_s N_{Y_s / \mathbb{P}^{s-1}} \to 0 \, .
\end{equation}
\end{coro}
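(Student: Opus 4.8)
The plan is to reduce Corollary \ref{Corollary_ConeProjNormalB_2} to Corollary \ref{Corollary_ConeProjNormalB_1} by realizing the projected configuration $X_s, Y_s \subset F_s \subset \mathbb{P}^s$ as an instance of the same cone-over-a-curve setup to which Corollary \ref{Corollary_ConeProjNormalB_1} applies, and then invoking \eqref{ConeProjNormalB_SES_1} directly in $\mathbb{P}^s$. The first thing I would check is that, for a general center $W$, the images behave as the hypotheses of Corollary \ref{Corollary_ConeProjNormalB_1} demand: namely that $Y_s \subset \mathbb{P}^{s-1} \subset \mathbb{P}^s$ is a smooth non-degenerate curve of genus $\gamma$, that $F_s$ is the cone over $Y_s$ with vertex $p_s := \pi_W(p)$ (where $p_s \notin \mathbb{P}^{s-1}$), and that $X_s \in |\mathcal{O}_{F_s}(m)|$ is cut by a general hypersurface of degree $m$. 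Genericity of $W$ together with $s \geq 5$ (so that $Y_s$ is still smooth, isomorphic to $Y$, and $X_s$ is still smooth, isomorphic to $X$, by the usual general-projection-from-a-linear-space arguments — iterated one-point projections, each avoiding the finitely many trisecant/tangent obstructions in dimension $\geq 5$) is what makes all of this hold. In particular one must note that $\pi_W$ restricted to the ruling lines of $F$ is an isomorphism onto the ruling lines of $F_s$, so $\varphi_s$ is identified with $\varphi$ under the isomorphisms $X_s \cong X$, $Y_s \cong Y$, and $\mathcal{O}_{X_s}(1)$ pulls back to a subsheaf of $\mathcal{O}_X(1)$ realizing the projected embedding line bundle.

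Granting this, Corollary \ref{Corollary_ConeProjNormalB_1} applied in $\mathbb{P}^s$ to the triple $X_s, Y_s, F_s$ yields immediately the sequence
\[
  0 \to \mathcal{O}_{X_s}(m) \to N_{X_s / \mathbb{P}^s} \to \varphi_s^{\ast} N_{Y_s / \mathbb{P}^{s-1}} \to 0 \, ,
\]
which is exactly \eqref{ConeProjNormalB_SES_2}. The key input is simply that the hypothesis ``$X$ cut on $F$ by a general hypersurface of degree $m$'' is preserved under general projection, because a general hyperplane section argument shows that $X_s$ still moves in a linear system on $F_s$ large enough that Bertini-type genericity (smoothness of $X_s$, and the splitting $R_{\varphi_s} \sim (m-1)Y_s|_{X_s}$ that drives the proof of Corollary \ref{Corollary_ConeProjNormalB_1}) goes through verbatim. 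Alternatively — and perhaps more in the spirit of Lemma \ref{Lemma_ConeProjNormalB} — one can argue inductively: for $s = r-1$, the single projection $\pi_W$ from a general point of $\mathbb{P}^r$ restricts on $F$ to a map whose effect on $X$ is covered by Lemma \ref{Lemma_ConeProjNormalB} combined with the identification $\mathcal{O}_X(R_{\pi_W|_X}) \simeq \mathcal{O}_X(1)^{\otimes 0}$... no: better to directly re-run the proof of Corollary \ref{Corollary_ConeProjNormalB_1}, since there $R_\varphi \sim (m-1)Y|_X$ is intrinsic to the cone structure and does not reference the ambient $\mathbb{P}^r$, so it holds equally for $F_s \subset \mathbb{P}^s$.

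The main obstacle I expect is the bookkeeping required to justify that a \emph{general} linear projection center $W$ of the stated dimension keeps $X_s$ and $Y_s$ smooth, non-degenerate, and projectively in the correct relative position (with $F_s$ genuinely a cone, i.e. the vertex not collapsing onto a point of $Y_s$ and the ruling lines remaining lines), together with the claim that the generality of $X$ inside $|\mathcal{O}_F(m)|$ descends to generality of $X_s$ inside $|\mathcal{O}_{F_s}(m)|$. Once this ``general projection preserves the setup'' lemma is in hand — and for $s \geq 5$ it is standard, since a smooth curve in $\mathbb{P}^s$ with $s \geq 3$ already has no obstruction to smooth generic one-point projection down to $\mathbb{P}^{s-1}$, but we need $s \geq 5$ precisely because $Y_s \subset \mathbb{P}^{s-1}$ must itself remain smooth and non-degenerate for the next application — the proof is a one-line appeal to Corollary \ref{Corollary_ConeProjNormalB_1}. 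I would therefore state the verification of the hypotheses as the body of the proof and then conclude by citing \eqref{ConeProjNormalB_SES_1}.
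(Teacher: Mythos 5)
Your overall strategy coincides with the paper's: check that a general projection $\pi_W$ carries the whole configuration $X, Y \subset F$ isomorphically onto $X_s, Y_s \subset F_s$, observe that the ramification divisor is carried to the ramification divisor and is still cut by a hypersurface of degree $m-1$, and then invoke Lemma \ref{Lemma_ConeProjNormalB} (equivalently, rerun Corollary \ref{Corollary_ConeProjNormalB_1}) in $\mathbb{P}^s$. The one place where your justification is off is the genericity step, which is in fact the entire content of the paper's proof: what is needed is that $\pi_W$ restrict to an isomorphism on the \emph{surface} $F$ --- so that the vertex stays off $Y_s$, distinct rulings remain distinct lines, and $F_s$ is again a cone over the smooth curve $Y_s$ --- and the paper obtains this from the fact that the secant variety of the surface $F$ has dimension at most $5$, so a general center $W$ of dimension $r-s-1$ misses it precisely when $s \geq 5$. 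Your explanation of the bound $s \geq 5$ (that $Y_s \subset \mathbb{P}^{s-1}$ must itself remain smooth for the next projection) only controls the curves and would already be satisfied for smaller $s$; iterated one-point projections avoiding the secant and tangent lines of $X$ and $Y$ do not by themselves rule out, say, two distinct ruling lines of $F$ being identified under $\pi_W$. With that one sentence supplied, the rest of your argument --- in particular the observation that $R_{\varphi_s} \sim (m-1)Y_s|_{X_s}$ is intrinsic to the cone structure and does not require $X_s$ to be cut by a hypersurface of $\mathbb{P}^s$ --- matches the paper's proof, which concludes by noting $\mathcal{O}_{X_s}(R_{\varphi_s}) \simeq \mathcal{O}_{X_s}(m-1)$ and applying Lemma \ref{Lemma_ConeProjNormalB} to $\varphi_s$.
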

\begin{proof}
Since $r \geq s+1 \geq 6$, a general projective subspace of $\mathbb{P}^r$ of dimension $r - s - 1$ does not meet the secant 
variety of $F$, which is of dimension at most $5$. Therefore $X$, $Y$ and $F$ are isomorphic to their images $X_s$, 
$Y_s$ and $F_s$. Also, the $m:1$ covering $\varphi : X \to Y$ induced by the ruling on $F$ goes to an $m:1$ 
covering $\varphi_s : X_s \to Y_s$ induced by the ruling on $F_s$ such that 
$
    {\pi_W}_{|_{Y}} \circ \varphi = \varphi_s \circ {\pi_W}_{|_{X}}
$.
In particular, ${\pi_W}_{|_{X}}(R_{\varphi}) = R_{\varphi_s}$. Thus the ramification divisor $R_{\varphi_s}$ is 
linearly equivalent to a divisor cut on $X_s$ by a hypersurface of degree $m-1$ in $\mathbb{P}^s$.
Hence $\mathcal O_{X_s} (R_{\varphi_s}) \simeq \mathcal O_{X_s} (m-1)$ and {\rm Lemma 
\ref{Lemma_ConeProjNormalB}} gives the exact sequence \eqref{ConeProjNormalB_SES_2}.
\end{proof}


\bigskip

\section{A short note on the Gaussian map}\label{Section_3}

Let $Y$ be a smooth curve of genus $\gamma$ and $L$ and $M$ be line bundles on $Y$. Let $\mu_{L,M}$
\begin{equation}\label{Sec3_mu_L,M}
 \mu_{L,M} \, : \, H^0 (Y, L) \otimes H^0 (Y, M) \to H^0 (Y, L \otimes M) 
\end{equation}
be the natural multiplication. The Gaussian map $\Phi_{L, M}$
\[
  \Phi_{L, M} \, : \, \ker \mu_{L,M} \to H^0 (Y, L \otimes M \otimes \omega_Y) 
\]
was introduced by Wahl in \cite{Wahl90}. Locally, $\Phi_{L, M} \, : \, s \otimes t \mapsto sdt - tds$ for sections $s \in H^0(L)$ 
and $t \in H^0(M)$. It has been studied by a number of authors. We refer to \cite{Wahl90} and \cite{CHM88} for its precise 
definition and some properties. We recall only several notions that will be used in {\rm Proposition \ref{Prob_Gauss_NBundle}} 
needed for the proof of {\rm Theorem A}.

The notation $R(L, M)$ is often used instead of $\ker \mu_{L, M}$ for the map $\mu_{L, M}$ in (\ref{Sec3_mu_L,M}). When $V 
\subset H^0 (Y, L)$ is a vector subspace and $M = \omega_Y$, the map $\mu_{L, M}$ in (\ref{Sec3_mu_L,M}) restricted on $V 
\otimes H^0 (Y, \omega_Y)$ will be denoted by $\mu_V$ and the Gaussian map restricted on $\ker \mu_V$ will be denoted by 
$\Phi_{\omega_Y, V}$. 

The proposition that follows is formulated in the specific form in which it will be used in the proof of {\rm Theorem A}.

\begin{prop}\label{Prob_Gauss_NBundle}
Let $Y$ be a smooth curve of general moduli of genus $\gamma \ge 10$, and let $E$ be a general line bundle on $Y$ of degree 
$ g - 2\gamma +1 \geq 2\gamma-1$. Let $V \subseteq H^0 (Y, E)$ be general linear subspace of dimension 
$
  r = \dim V \geq \max \left\lbrace \gamma, \frac{2(g-1)}{\gamma} \right\rbrace
$. 
Consider the embedding $Y \subset \mathbb{P}^{r-1} \equiv \mathbb{P} (V^{\vee})$ given by $V$. Then 
\begin{itemize}
 \item the restricted Gaussian mapping $\Phi_{\omega_Y, V}$ is surjective, and
 \item $h^0 (N_{Y/\mathbb{P}^{r-1}}(-1)) = \dim V = r$.
\end{itemize}
\end{prop}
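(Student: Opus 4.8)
The plan is to reduce the second bullet to the first via a standard normal-bundle exact sequence, and to establish surjectivity of the restricted Gaussian map $\Phi_{\omega_Y,V}$ by a dimension count that combines (i) the known surjectivity of the \emph{full} Gaussian map $\Phi_{\omega_Y,\omega_Y}$ on a general curve of genus $\gamma \geq 10$ (Wahl, Ciliberto--Harris--Miranda) with (ii) a count showing that a general subspace $V \subseteq H^0(Y,E)$ of the prescribed dimension $r$ contributes ``enough'' to cover $H^0(Y,\omega_Y \otimes E)$. First I would recall the Euler-sequence computation: for $Y \subset \mathbb{P}^{r-1} = \mathbb{P}(V^\vee)$ embedded by $V$, one has
\[
  0 \to N_{Y/\mathbb{P}^{r-1}}^\vee(1) \to V^\vee \otimes \mathcal{O}_Y \to \mathcal{O}_Y(1) = E \to 0 ,
\]
so that, dualizing and twisting, $h^0(N_{Y/\mathbb{P}^{r-1}}(-1))$ is controlled by $h^1$ of the conormal bundle twisted by $\omega_Y(1)$; by Serre duality this $h^1$ is exactly the corank of the map
\[
  H^0(Y,\omega_Y) \otimes H^0(Y, E) \supseteq \ker \mu_V \xrightarrow{\ \Phi_{\omega_Y,V}\ } H^0(Y,\omega_Y \otimes E),
\]
i.e. the cokernel of $\Phi_{\omega_Y,V}$. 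Granting that $\Phi_{\omega_Y,V}$ is surjective, a routine diagram chase with the Euler sequence restricted to $Y$ then yields $h^0(N_{Y/\mathbb{P}^{r-1}}(-1)) = \dim V = r$ (the contribution being $h^0(\mathcal{O}_Y) \cdot \dim V$ minus the easily computed excess, all other relevant $h^1$'s vanishing because $E$ is nonspecial of high degree and $\gamma \geq 10$). So the first bullet implies the second.

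For the first bullet, I would argue as follows. Since $E$ is general of degree $e := g-2\gamma+1 \geq 2\gamma - 1$, both $E$ and $\omega_Y \otimes E$ are nonspecial with $h^0(E) = e - \gamma + 1$ and $h^0(\omega_Y \otimes E) = e + \gamma - 1$, and $E$ is very ample (indeed $Y \hookrightarrow \mathbb{P}^{h^0(E)-1}$). The restricted Gaussian map fits into the commutative setup relating $\Phi_{\omega_Y, V}$, $\Phi_{\omega_Y, E}$ (the full one on all of $H^0(Y,E)$), and $\Phi_{\omega_Y,\omega_Y}$. The key input is that for $Y$ general of genus $\gamma \geq 10$, the Gaussian $\Phi_{\omega_Y,\omega_Y}$ is surjective (this is the Ciliberto--Harris--Miranda / Wahl theorem — surjectivity holds for general $Y$ of genus $\geq 10$, failing only for $\gamma \leq 9$ and a few special curves). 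From surjectivity of $\Phi_{\omega_Y,\omega_Y}$ one deduces surjectivity of $\Phi_{\omega_Y, E}$ for $E$ general of sufficiently high degree — again by a degeneration or base-point-free-pencil trick argument, using that a general nonspecial $E$ of degree $\geq 2\gamma$ can be written so that $H^0(E)$ is generated in a way controlled by $H^0(\omega_Y)$, or by directly quoting the known result that $\Phi_{\omega_Y, L}$ is surjective for $L$ general of degree $\geq 2\gamma + 2$ when $\Phi_{\omega_Y,\omega_Y}$ is (cf. the standard lemmas on propagation of Gaussian surjectivity). Finally, passing from $E$ to a \emph{general} subspace $V \subseteq H^0(Y, E)$ of dimension $r \geq \max\{\gamma, 2(g-1)/\gamma\}$: one checks that $r$ is large enough that the image $\Phi_{\omega_Y, V}(\ker \mu_V)$ still spans the target. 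Here the numerical hypotheses enter: the condition $r \geq 2(g-1)/\gamma$ is equivalent to $r\gamma \geq 2g - 2 = \deg \omega_Y$, which is precisely what is needed so that a general $r$-dimensional $V$ separates enough — more concretely, $\dim(H^0(\omega_Y) \otimes V) = (2\gamma - 2) r$ is at least $h^0(\omega_Y \otimes E) + \dim(\text{generic part of }\ker\mu_V)$, and genericity of $V$ prevents any degenerate behaviour.

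The main obstacle I anticipate is the passage from the full space $H^0(Y,E)$ to a \emph{general subspace} $V$ of dimension exactly $r$: surjectivity of a Gaussian map is not automatically inherited by restriction to a subspace, so one must genuinely use the genericity of $V$ together with the sharp numerical bound $r\gamma \geq 2(g-1)$. I would handle this by a semicontinuity/incidence-variety argument — exhibiting one special $(Y, E, V)$ for which $\Phi_{\omega_Y, V}$ is surjective (e.g. degenerating $E$ to $\omega_Y \otimes (\text{effective})$ and choosing $V$ adapted to the already-surjective $\Phi_{\omega_Y,\omega_Y}$) — and then invoking openness of the surjectivity locus in the family of triples $(Y,E,V)$. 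The bookkeeping of which cohomology groups vanish (so that the normal-bundle computation in the second bullet is clean) is routine given $\deg E \geq 2\gamma - 1$ and $\gamma \geq 10$, and I would relegate it to a short calculation rather than spell it out.
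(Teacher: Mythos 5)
Your overall architecture matches the paper's: reduce the normal-bundle statement to the corank of $\Phi_{\omega_Y,V}$ (the Ciliberto--Miranda argument), and get surjectivity of the restricted Gaussian from genericity plus the numerical bound $r\gamma \geq 2(g-1)$. But two essential steps are missing, and they are precisely where the work lies. First, the identification $h^0\bigl(N_{Y/\mathbb{P}^{r-1}}(-1)\bigr) = \dim V + \corank\,\Phi_{\omega_Y,V}$ in the incomplete-embedding case is only valid \emph{provided the multiplication map} $\mu_V : V \otimes H^0(Y,\omega_Y) \to H^0(Y,\omega_Y\otimes E)$ \emph{is surjective}; you never state or prove this. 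The paper proves it by combining the surjectivity of the full map $\mu$ (Green, Ciliberto, since $\deg E \geq 2\gamma-1$) with Ballico's Proposition 4.3, which computes $\dim\bigl(\ker\mu \cap (V\otimes H^0(\omega_Y))\bigr)$ for a general subspace $V$; this yields $\dim\ker\mu_V = r\gamma - g + \gamma$ and hence $\dim \operatorname{Im}\mu_V = g-\gamma = h^0(\omega_Y\otimes E)$.

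Second, and more seriously, your treatment of the crux — surjectivity of $\Phi_{\omega_Y,V}$ for a \emph{general subspace} $V$ — is not a proof. Semicontinuity/openness of the surjectivity locus only helps once you exhibit one triple $(Y,E,V)$ with $\Phi_{\omega_Y,V}$ surjective, and your proposed special member (degenerate $E$ to $\omega_Y\otimes(\text{effective})$ and choose $V$ ``adapted'' to $\Phi_{\omega_Y,\omega_Y}$) is not constructed; surjectivity of the full $\Phi_{\omega_Y,E}$ does not formally descend to an $r$-dimensional subspace. The paper's mechanism is different and genuinely needed: Ballico's Theorem 4.1 asserts that for general $V$ the restricted Gaussian $\Phi_{\omega_Y,V}$ has \emph{maximal rank}; if it were not surjective it would be injective with proper image, forcing $\dim\ker\mu_V = r\gamma - g + \gamma < h^0(\omega_Y^2\otimes E) = g+\gamma-2$, i.e.\ $r < 2(g-1)/\gamma$, contradicting the hypothesis. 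Your dimension count gestures at the right threshold but is not the relevant one (note also the slip $\dim\bigl(H^0(\omega_Y)\otimes V\bigr) = \gamma r$, not $(2\gamma-2)r$): what must be compared is $\dim\ker\mu_V$ against $h^0(\omega_Y^2\otimes E)$, and turning that comparison into surjectivity requires the maximal-rank input, which your proposal neither proves nor cites.
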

\begin{proof}
Denote by $\mu$ the cup-product map
\[
  \mu: H^0(Y, E) \otimes H^0(Y, \omega_Y) \to H^0(Y, \omega_Y\otimes E) \, .
\]
Since $\deg E = g-2\gamma+1 \geq 2\gamma - 1$, so $E$ is very ample, it follows by \cite[Theorem (4.e.1) and Theorem 
(4.e.4)]{Green84} and \cite{Cili83} that $\mu$ is surjective.

The linear series determined by $V$ is very ample since $Y \in \mathcal{M}_{\gamma}$ is general, 
$\gamma \ge 10$ and $V \subset H^0 (Y, E)$ is also general. Consider the restriction $\mu_V$ of $\mu$ to 
\[
  \mu_V : V \otimes H^0(Y, \omega_Y) \to H^0(Y, \omega_Y\otimes E) \, .
\]
Let $R(\omega_Y, E)$ be the kernel of the map $\mu$ and consider the Gaussian map 
$\Phi_{\omega_Y, E}$ defined on $R(\omega_Y, E)$
\[
  \Phi_{\omega_Y, E} : R (\omega_Y, E) \to H^0 (\omega_Y^2 \otimes E) \, ,
\]
and similarly its restriction $\Phi_{\omega_Y, V}$ defined on the kernel $R(\omega_Y, V)$ of the map $\mu_V$
\begin{equation}\label{Gauss_map_restrict}
  \Phi_{\omega_Y, V} : R (\omega_Y, V) \to H^0 (\omega_Y^2 \otimes E) \, .
\end{equation}
In the case of complete embedding, i.e. if $V = H^0 (Y, E)$, the claim follows by 
\cite[Proposition 1.2]{CilMir1990}, where it is proven that 
\[
    h^0 (N_{Y/\mathbb{P}^{r-1}}(-1)) = h^0 (Y, E) + \corank \left( \Phi_{\omega_Y, E} \right) \, ,
\]
and  by \cite[Proposition (2.9)]{CLM96}, where it is proven that $\Phi_{\omega_Y, E}$ is surjective for $\gamma\geq 10$ and 
$\deg E = g - 2\gamma + 1 \geq 2\gamma -1$. In the case of incomplete embedding, i.e. if $V \subsetneq H^0 (Y, 
E)$, exactly the same argument as in the proof of \cite[Proposition 1.2]{CilMir1990} shows that
\begin{equation}\label{Coho_dim_N(-1)_incompele}
    h^0 (N_{Y/\mathbb{P}^{r-1}}(-1)) 
	  = \dim V + \corank \left( \Phi_{\omega_Y, V} \right) 
	  = r + \corank \left( \Phi_{\omega_Y, V} \right) \, ,
\end{equation}
provided that $\mu_V$ is surjective. This is what we will prove next.

Since $\mu_V$ is the restriction of $\mu$ to $V \otimes H^0(Y, \omega_Y)$, we have
\[
 \ker \mu_V = \ker \mu \cap \left(V \otimes H^0(Y, \omega_Y)\right) \, .
\]
Due to $\gamma \leq \dim V \leq \dim H^0 (Y, E)$, it follows from \cite[Proposition 4.3]{Bal1995} that
\begin{equation}\label{dim_ker_muV}
 \dim \left(\ker \mu \cap \left(V \otimes H^0(Y, \omega_Y)\right) \right) 
      = \max \{0, \dim \left( \ker \mu \right) - (h^0 (Y,E) - \dim V) h^0 (Y, \omega_Y) \} \, .
\end{equation}
Since $\mu$ is surjective, 
$
    \dim \left( \ker \mu \right)
	  = (\deg(E)-\gamma+1)\gamma - (\deg(E) + \gamma - 1)
	  = (g-3\gamma)(\gamma-1)
$. By assumption $r = \dim V \geq \frac{2(g-1)}{\gamma}$, hence 
\[
\begin{aligned}
 \dim \ker \mu - (h^0 (Y, E) - \dim V)\gamma & = (\gamma - 1)(g - 3\gamma) - (g - 3\gamma + 2 - r)\gamma \\
    & = \gamma - g + r\gamma > 0 \, .
\end{aligned}
\]
By (\ref{dim_ker_muV}) we obtain
\[
 \dim \ker \mu_V = \gamma - g + r\gamma \, .
\]
From here we get for the dimension of its image
\[
 \dim \left( \Im (\mu_V) \right) = r\gamma - \dim \ker \mu_V = g - \gamma = h^0 (Y, \omega_Y \otimes E) \, .
\]
This shows that $\mu_V$ is surjective, which proves (\ref{Coho_dim_N(-1)_incompele}).

It remains to show that $\Phi_{\omega_Y, V}$ is surjective. According to \cite[Theorem 4.1]{Bal1995}, the Gaussian map 
$\Phi_{\omega_Y, V}$ is of maximal rank. Suppose that it is not surjective. Then it must be injective and its image in $H^0 
(Y, \omega_Y^2 \otimes E)$ should be proper, hence
\[
 \gamma - g + r\gamma = \dim \ker \mu_V < h^0 (Y, \omega_Y^2 \otimes E) = g + \gamma - 2 \, ,
\]
which implies $r < \frac{2(g-1)}{\gamma}$. The last is impossible in view of the assumption that $r = \dim V \geq \max 
\left\lbrace \gamma, \frac{2(g-1)}{\gamma} \right\rbrace$. Therefore, $\Phi_{\omega_Y, V}$ must be surjective and from 
(\ref{Coho_dim_N(-1)_incompele}) we conclude also that $h^0 (N_{Y/\mathbb{P}^{r-1}}(-1)) = \dim V = r$.
\end{proof}

\bigskip

\section{Proof of Theorem A}\label{Section_4}

Before demonstrating the proof of {\rm Theorem A} we recall a few facts concerning the Hilbert scheme of cones.
{\rm Proposition \ref{Prop_ruled_surface}} and the counting of the number of parameters in Remark 
\ref{Section_2_Remark_Fam_counting} gives the idea how to construct explicitly the component $\mathcal{D}_{d, g, R}$. Recall 
that $d = 2g - 4\gamma + 2$ and $R = g - 3\gamma + 2$.

Let $\gamma \geq 10$ and $g \geq 4\gamma - 2$ be integers. Consider the Hilbert scheme $\mathcal{I}_{d/2, \gamma, R-1}$ of 
smooth curves of degree $d/2$ and genus $\gamma$ in $\mathbb{P}^{R - 1}$. By \cite[Theorem on p. 75]{Har82} and \cite[Theorem on 
p. 26]{Ser84}, \ $\mathcal{I}_{d/2, \gamma, R - 1}$ is reduced and irreducible of dimension 
$
  \lambda_{d/2, \gamma, R - 1} = Rd/2 - (R-4)(\gamma - 1)
$. Denote by 
$\mathcal{H}(\mathcal{I}_{d/2, \gamma, R-1})$ the family of cones in $\mathbb{P}^{R}$ over curves representing points of 
$\mathcal{I}_{d/2, \gamma, R-1}$. Since $\gamma \geq 10$ it follows by \cite[Proposition 2.1]{CLM96} that for a general $[Y] 
\in \mathcal{I}_{d/2, \gamma, R-1}$ the Gaussian map $\Phi_{\omega_Y, \mathcal{O}_Y (1)}$ is surjective, 
hence by \cite[Proposition 2.18]{CLM96} $\mathcal{H}(\mathcal{I}_{d/2, \gamma, R-1})$ is a generically 
smooth component of the Hilbert scheme of surfaces of degree $d/2$ in $\mathbb{P}^{R}$ and 
\begin{equation}\label{Sec4_dim_scheme_of_cones}
  \dim \mathcal{H}(\mathcal{I}_{d/2, \gamma, R-1}) 
      = h^0 (Y, N_{Y / \mathbb{P}^{R - 1}}) + R
      = \lambda_{d/2, \gamma, R-1} + R \, .
\end{equation}

First we give the proof of {\rm Theorem A} in the case $r = R$.

\begin{prop}\label{Prop_construct_DdgR}
Suppose that $\gamma \geq 10$ and $g \geq 4\gamma - 2$. Let $\mathcal{F}_{d, g, R}$ be the family of curves $C \subset 
\mathbb{P}^R$ obtained as the intersection of a cone $F$ and a general hypersurface of degree 2 in $\mathbb{P}^R$, where $[F] 
\in \mathcal{H}(\mathcal{I}_{d/2, \gamma, R-1})$. Let $\mathcal{D}_{d, g, R}$ be the closure of the set of points in 
$\mathcal{I}_{d, g, R}$ corresponding to curves from the family $\mathcal{F}_{d, g, R}$. Then 
\begin{itemize}
 \item $\mathcal{D}_{d, g, R}$ is a generically smooth irreducible component of $\mathcal{I}_{d, g, R}$, and 
 \item $\dim \mathcal{D}_{d, g, R} = 2g - \gamma - 2 + (R+1)^2 = \lambda_{d, g, R} + R\gamma - 2g + 2$.
\end{itemize}
\end{prop}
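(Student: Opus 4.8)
The plan is to show two things: first, that the family $\mathcal{F}_{d,g,R}$ sweeps out a locus in $\mathcal{I}_{d,g,R}$ of the claimed dimension, and second, that for a general $C \in \mathcal{F}_{d,g,R}$ one has $h^0(C, N_{C/\mathbb{P}^R}) = \dim \mathcal{D}_{d,g,R}$, which by upper-semicontinuity of $h^0$ of the normal bundle and the fact that the Hilbert scheme has dimension at least $\chi(N_{C/\mathbb{P}^R})$ at $[C]$ forces $\mathcal{D}_{d,g,R}$ to be a generically smooth component.

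\emph{Dimension count.} The family of cones $\mathcal{H}(\mathcal{I}_{d/2,\gamma,R-1})$ has dimension $\lambda_{d/2,\gamma,R-1}+R$ by \eqref{Sec4_dim_scheme_of_cones}. A general hypersurface of degree $2$ in $\mathbb{P}^R$ moves in a linear system of dimension $\binom{R+2}{2}-1$, but what matters is the dimension of the image in $|\mathcal{O}_F(2)|$, which by Remark \ref{Section_2_Remark_Fam_counting} (and the computation $\dim|\mathcal{O}_F(2)| = 3g-8\gamma+5$ there) contributes $\dim|\mathcal{O}_F(2)|$ fibre directions after subtracting the quadrics through $F$. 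I would assemble these: $\dim \mathcal{F}_{d,g,R} = \dim \mathcal{H}(\mathcal{I}_{d/2,\gamma,R-1}) + \dim|\mathcal{O}_F(2)| - (\text{correction for quadrics containing } F)$, and check this equals $2g-\gamma-2+(R+1)^2$, which in turn equals $\lambda_{d,g,R}+R\gamma-2g+2$ by Riemann–Roch for $\lambda$; this last identity is the routine algebra already flagged in Remark \ref{Section_2_Remark_Fam_counting}. One must also verify the map from the parameter space to $\mathcal{I}_{d,g,R}$ is generically finite, i.e. that a general such $C$ determines its cone $F$ and the quadric essentially uniquely — this follows because $F$ is recovered as the intersection of the quadrics through $C$ (the cone being cut out in the appropriate degree), using that $C$ is linearly normal and Proposition \ref{Prop_ruled_surface}(c.1).

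\emph{Normal bundle computation.} Here Corollary \ref{Corollary_ConeProjNormalB_1} with $m=2$ gives the short exact sequence
\begin{equation*}
  0 \to \mathcal{O}_C(2) \to N_{C/\mathbb{P}^R} \to \varphi^{\ast} N_{Y/\mathbb{P}^{R-1}} \to 0 \, ,
\end{equation*}
where $\varphi: C \to Y$ is the double cover induced by the ruling and $Y \subset \mathbb{P}^{R-1}$ is the base curve, of genus $\gamma$ and degree $g-2\gamma+1$. The plan is to compute $h^0$ and $h^1$ of the two outer terms and control the connecting map. For $\mathcal{O}_C(2)$: by Proposition \ref{Prop_ruled_surface}(c.2) the hyperplane series restricts to $|R_\varphi|$, so $\mathcal{O}_C(1) = \mathcal{O}_C(R_\varphi)$ and $\mathcal{O}_C(2) = \omega_C \otimes (\varphi^{\ast}\omega_Y)^{\vee} \otimes \mathcal{O}_C(R_\varphi)$; its degree is $2d = 4g-8\gamma+4 > 2g-2$ so it is nonspecial and $h^0(\mathcal{O}_C(2)) = 2d-g+1$, $h^1 = 0$. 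For $\varphi^{\ast}N_{Y/\mathbb{P}^{R-1}}$: push forward and use $\varphi_{\ast}\mathcal{O}_C = \mathcal{O}_Y \oplus \mathcal{O}_Y(-E)$, reducing to $h^0\!\big(N_{Y/\mathbb{P}^{R-1}}\big) + h^0\!\big(N_{Y/\mathbb{P}^{R-1}} \otimes \mathcal{O}_Y(-E)\big)$. The first summand is $\lambda_{d/2,\gamma,R-1}$ from the distinguished-component theory used for \eqref{Sec4_dim_scheme_of_cones}; for the second, note $\mathcal{O}_Y(-E) = \mathcal{O}_Y(-1)$ since $Y$ is embedded by $|E|$ (as $E$ is very ample nonspecial and $|V|=|H^0(Y,E)|$ here, $V$ being complete when $r=R$), so this is $h^0(N_{Y/\mathbb{P}^{R-1}}(-1))$, which is exactly what Proposition \ref{Prob_Gauss_NBundle} computes to be $R$ (with $V = H^0(Y,E)$, $r = R$, using surjectivity of $\Phi_{\omega_Y, \mathcal{O}_Y(1)}$ from \cite[Proposition (2.9)]{CLM96}). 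Adding up and checking the connecting homomorphism $H^1(C,\mathcal{O}_C(2)) = 0$ doesn't obstruct (it is zero), one gets $h^0(N_{C/\mathbb{P}^R}) = (2d-g+1) + \lambda_{d/2,\gamma,R-1} + R$, and I would verify by direct substitution that this equals $2g-\gamma-2+(R+1)^2$. Finally, since $\mathcal{F}_{d,g,R}$ gives a subvariety of $\mathcal{I}_{d,g,R}$ through $[C]$ of this same dimension and $h^0(N_{C/\mathbb{P}^R})$ is an upper bound for the local dimension of the Hilbert scheme, $[C]$ is a smooth point and $\mathcal{D}_{d,g,R}$ is a generically smooth component; irreducibility follows from irreducibility of $\mathcal{I}_{d/2,\gamma,R-1}$ and of the linear system $|\mathcal{O}_F(2)|$.

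\emph{Main obstacle.} The delicate point is the normal-bundle sequence bookkeeping: ensuring $h^1$ vanishings hold so that $h^0$ is genuinely additive over the sequence (in particular $H^1(\varphi^{\ast}N_{Y/\mathbb{P}^{R-1}})$ and the interaction with $H^1(\mathcal{O}_C(2))=0$), and correctly identifying $\mathcal{O}_Y(-E)$ with $\mathcal{O}_Y(-1)$ and hence invoking Proposition \ref{Prob_Gauss_NBundle} with the \emph{complete} $V$. The generic finiteness of the parametrization — that a general $C$ reconstructs the pair $(F, Q_2)$ — also needs a clean argument via the quadrics containing $C$, and is where one might worry about extra components of quadrics through $C$ not cutting out the cone.
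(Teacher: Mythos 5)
Your outline follows the paper's own proof essentially step for step: the exact sequence $0 \to \mathcal{O}_C(2) \to N_{C/\mathbb{P}^R} \to \varphi^{\ast}N_{Y/\mathbb{P}^{R-1}} \to 0$ from Corollary \ref{Corollary_ConeProjNormalB_1} with $m=2$, the pushforward via $\varphi_{\ast}\mathcal{O}_C = \mathcal{O}_Y \oplus \mathcal{O}_Y(-E)$ with $\mathcal{O}_Y(-E)=\mathcal{O}_Y(-1)$, the inputs $h^0(N_{Y/\mathbb{P}^{R-1}})=\lambda_{d/2,\gamma,R-1}$ and $h^0(N_{Y/\mathbb{P}^{R-1}}(-1))=R$ (the complete case of Proposition \ref{Prob_Gauss_NBundle}, i.e.\ \cite{CilMir1990} plus \cite{CLM96}), the vanishing $h^1(\mathcal{O}_C(2))=0$ making $h^0$ additive, and the final comparison of $\dim\mathcal{F}_{d,g,R}$ with $h^0(N_{C/\mathbb{P}^R})=\dim T_{[C]}$. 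All of this is correct and is exactly what the paper does.

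Two points, however, need repair. First, your argument for generic finiteness of the parametrization is false as stated: the intersection of the quadrics through $C$ can never be the cone $F$. Since $C\in|\mathcal{O}_F(2)|$ and $h^0(F,\mathcal{I}_{C/F}(2))=h^0(\mathcal{O}_F)=1$, every quadric through $C$ either contains $F$ or cuts exactly $C$ on $F$; picking one quadric $Q_0$ of the latter kind, the base locus of the quadrics through $C$ lies inside $Q_0$, and $F\not\subset Q_0$, so that intersection returns (at best) $C$, not $F$. The step itself is genuinely needed --- it supplies the lower bound $\dim\mathcal{D}_{d,g,R}\ge \dim\mathcal{H}(\mathcal{I}_{d/2,\gamma,R-1})+\dim|\mathcal{O}_F(2)|$, which the paper passes over silently --- but the correct mechanism is different: a smooth curve $C$ of genus $g\ge 2$ admits only finitely many degree-two maps onto curves of genus $\gamma$ (they correspond to involutions in the finite group $\mathrm{Aut}(C)$), and each such $\varphi$ determines the cone as the union of the lines spanned by the fibres $\varphi^{-1}(y)$ together with the vertex; hence a general $C$ lies on only finitely many cones of the family, and the dimension count stands. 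Second, your count of the ``fibre directions'' tacitly assumes that the restriction map $H^0(\mathbb{P}^R,\mathcal{O}_{\mathbb{P}^R}(2))\to H^0(F,\mathcal{O}_F(2))$ is surjective, i.e.\ that the general member of $|\mathcal{O}_F(2)|$ really is a quadric section of $F$; the paper secures this by quoting projective normality of the general hyperplane section $Y$ (via \cite{GL1986}), hence of the cone $F$, and this ingredient should be stated explicitly, as without it the swept family could be smaller than $\dim\mathcal{H}(\mathcal{I}_{d/2,\gamma,R-1})+3g-8\gamma+5$ and the equality with $h^0(N_{C/\mathbb{P}^R})$ would break down.
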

\begin{proof}
First we compute $\dim \mathcal{D}_{d, g, R}$. For a general point $[F] \in \mathcal{H}(\mathcal{I}_{d/2, \gamma, R - 1})$, the 
cone $F$ is projectively normal since it is a cone over a general curve $Y$ from $\mathcal{I}_{d/2, \gamma, R-1}$, 
which is projectively normal by \cite[Theorem 1, p. 74]{GL1986}. Therefore the linear series $|\mathcal{O}_F (2)|$ on $F$ is 
induced by $|\mathcal{O}_{\mathbb{P}^R} (2)|$. By equalities (\ref{Sec4_dim_scheme_of_cones}) and 
(\ref{S_decompose_cohomologies}), $h^0 (F, \mathcal{O}_F (2)) = 3g - 8\gamma + 6$. Therefore

\[
  \begin{aligned} 
    \dim \mathcal{D}_{d, g, R} & = \dim \mathcal{H}(\mathcal{I}_{g - 2\gamma + 1, \gamma, R - 1}) + h^0 (F, \mathcal{O}_F (2)) 
- 1 \\
      & = \lambda_{d/2, \gamma, R - 1} + R + 3g - 8\gamma + 5 \, .
  \end{aligned}
\]
\noindent Remark that since $\lambda_{d/2, \gamma, R - 1} = Rd/2 - (R-4)(\gamma - 1)$ and $\lambda_{d, g, R} = (R+1)d - 
(R-1)(g-1)$, the expression for $\dim \mathcal{D}_{d, g, R}$ can also be written as

\[
  \dim \mathcal{D}_{d, g, R} = \lambda_{d, g, R} + R\gamma - 2g + 2 = (R+1)^2 + 2g - \gamma - 2 \, .
\]

To prove that $\mathcal{D}_{d, g, R}$ is a generically smooth component of $\mathcal{I}_{d, g, R}$, it is sufficient to show that 
for a general $[X] \in \mathcal{D}_{d, g, R}$ we have 
$
  h^0 (X, N_{X/\mathbb{P}^r}) = (R+1)^2 + 2g - \gamma - 2 = \dim \mathcal{D}_{d, g, R}.
$
Since $X \subset F$ is cut by a general quadratic hypersurface in $\mathbb{P}^R$, the ruling of $F$ 
induces a double covering $\varphi : X \to Y$, where $Y \subset F$ is cut by a general hyperplane in $\mathbb{P}^R$ and also 
$[Y] \in \mathcal{I}_{d/2, \gamma, R-1}$ is general. It follows by {\rm Proposition \ref{Prop_ruled_surface}} and {\rm 
Corollary \ref{Corollary_ConeProjNormalB_1}} that 
\[
  0 \to \mathcal{O}_X (2) \to N_{X / \mathbb{P}^{R}} \to \varphi^{\ast} N_{Y / \mathbb{P}^{R-1}} \to 0 \, .
\] 
Since 
$
  \deg \mathcal{O}_X (2) = 2d = 4g - 8\gamma + 4 > 2g - 2
$
, the series $|\mathcal{O}_X (2)|$ is nonspecial, hence $h^1 (X, \mathcal{O}_X (2)) = 0$. Therefore, using projection formula, Leray's isomorphism and $\varphi_*O_X = O_Y+O_Y(-E)$, we get 
\[
 \begin{aligned}
  h^0 (X, N_{X / \mathbb{P}^{R}}) 
      & = h^0 (X, \mathcal{O}_X (2)) + h^0 (X, \varphi^{\ast} N_{Y / \mathbb{P}^{R-1}}) \\
      & = h^0 (X, \mathcal{O}_X (2)) + h^0 (Y, N_{Y / \mathbb{P}^{R-1}}) + h^0 (Y, N_{Y / \mathbb{P}^{R-1}}(-1)) \\
      & =  3g - 8\gamma + 5 + \lambda_{d/2, \gamma, R-1} + R  \\
      & =  \dim \mathcal{D}_{d, g, R} \, .
 \end{aligned}
\] 
This implies that for a general $[X] \in \mathcal{D}_{d,g,R}$ 
\[
  \dim \mathcal{D}_{d,g,R} = \dim \mathcal{F}_{d,g,R} = h^0 (X, N_{X / \mathbb{P}^{R}}) = \dim T_{[X]} \mathcal{D}_{d,g,R} \, ,
\]
therefore $\mathcal{D}_{d,g,R}$ is a generically smooth component of $\mathcal{I}_{d,g,R}$.
\end{proof}

Now we give the proof of {\rm Theorem A} for $\max \left\lbrace \gamma, \frac{2(g-1)}{\gamma} \right\rbrace \leq r < R$.

\begin{proof}[{\bf Proof of { Theorem A}}]
Let $\mathcal{F}_{d, g, r}$ be the family of curves in $\mathbb{P}^r$ obtained from the family 
$\mathcal{F}_{d, g, R}$ in {\rm Proposition \ref{Prop_construct_DdgR}} by a projection $\pi_W : \mathbb{P}^R \to \mathbb{P}^r$ 
with center $W \subset \mathbb{P}^R$, where $W \cong \mathbb{P}^{R-r-1}$ is general. 
Let $\mathcal{D}_{d, g, r}$ be the closure of the set of points in $\mathcal{I}_{d, g, r}$ corresponding to the curves from the 
family $\mathcal{F}_{d, g, r}$. 
Since $\codim (W, \mathbb{P}^R) \geq \gamma+1 \geq 10$, a cone $F$ and curves $X$ and $Y$ as in the proof of {\rm Proposition 
\ref{Prop_construct_DdgR}} are isomorphic to their images $F_r = \pi_W (F)$, $X_r = \pi_W (X)$ and $Y_r = \pi_W (Y)$, 
correspondingly. 
Also, $\varphi_r : X_r \to Y_r$ induced by ruling of $F_r$ is a double covering as 
in {\rm Corollary \ref{Corollary_ConeProjNormalB_2}}. Note that if $p$ is the vertex of $F$ then $\pi_W (p)$ is the vertex of 
$F_r$. Therefore 
\begin{equation}\label{TheoremA_proof_SES_Normal_bundles}
    0 \to \mathcal{O}_{X_r} (2) \to N_{X_r / \mathbb{P}^{r}} \to {\varphi^{\ast}_r} N_{Y_r / \mathbb{P}^{r-1}} \to 0 \, .
\end{equation} 
The embedding of $Y_r \subset \mathbb{P}^{r-1}$ is incomplete, but since $\deg Y_r = d/2 = g - 2\gamma + 1  \geq 2\gamma - 1$, 
it follows by \cite{Ser84} that $\mathcal{I}_{d/2, \gamma, r-1}$ has a unique generically smooth component of the expected 
dimension $\lambda_{d/2, \gamma, r-1}$. Therefore, for a general $Y_r \in \mathcal{I}_{d/2, \gamma, r-1}$ (as in our case), $h^1 
(Y_r, N_{Y_r / \mathbb{P}^{r-1}}) = 0$ or equivalently $h^0 (Y_r, N_{Y_r / \mathbb{P}^{r-1}}) = \lambda_{d/2, \gamma, r-1}$. 
Then we can compute $h^0 (N_{X_r / \mathbb{P}^r})$ in a similar way as before. Since the projection $\pi_W$ is general and 
$
    r \geq \max \left\lbrace \gamma, \frac{2(g-1)}{\gamma} \right\rbrace
$,
it follows by {\rm Proposition \ref{Prob_Gauss_NBundle}} that $h^0 (Y_r, N_{Y_r / \mathbb{P}^{r-1}}(-1)) = r$. 
Since $\deg \mathcal{O}_{X_r} (2) = 2d > 2g - 2$ we have 
$
  h^1 (X_r, \mathcal{O}_{X_r} (2)) = h^1 (X, \mathcal{O}_{X} (2)) = 0
$
. Using (\ref{TheoremA_proof_SES_Normal_bundles}) we find 
\[
 \begin{aligned}
  h^0 (X_r, N_{X_r / \mathbb{P}^{r}}) 
      & = h^0 (X_r, \mathcal{O}_{X_r} (2)) + h^0 (X_r, {\varphi^{\ast}_r} N_{Y_r / 
\mathbb{P}^{r-1}}) \\
      & =  2d - g + 1 + h^0 (Y_r, N_{Y_r / \mathbb{P}^{r-1}}) + h^0 (Y_r, N_{Y_r / 
\mathbb{P}^{r-1}} (-1)) \\
      & =  4g - 8\gamma + 4 - g + 1 + \lambda_{g - 2\gamma + 1, \gamma, r - 1} + r \\
      & =  3g - 8\gamma + 5 + r + (g - 2\gamma + 1)r - (r-4)(\gamma - 1) \\
      & =  (r+3)g - (3r+4)\gamma + 3r+1 \, . 
 \end{aligned}
\] 
Let's compute also the dimension of the family $\mathcal{F}_{d,g,r}$. It is similar to the one carried out in the proof of 
\cite[Theorem 4.3]{CIK17}. Since the curves in $\mathcal{F}_{d,g,r}$ are obtained as generic projections from $\mathbb{P}^R$ to 
$\mathbb{P}^r$, we have
\[
\begin{aligned}
  \dim \mathcal{F}_{d,g,r} 
      & = \dim \mathcal{F}_{d,g,R} - \dim\Aut \mathbb{P}^R + \dim\Aut \mathbb{P}^r + \dim Grass (r+1, R+1) \\
      & = 2g - \gamma - 2 + (r+1)^2 + (r+1)(R-r) \\
      & = 2g - \gamma - 2 + (r+1)(g - 3\gamma + 2 + 1) \\
      & = (r+3)g - (3r+4)\gamma + 3r + 1 \, .
\end{aligned}
\]
Notice that this number is exactly equal to the one claimed in {\rm Theorem A} since
\[
\begin{aligned}
  \lambda_{d, g, r} + r\gamma - 2g+2 
      & = (r+1)(2g - 4\gamma + 2) - (r-3)(g - 1) + r\gamma - 2g + 2 \\
      & = (r+3)g - (3r+4)\gamma + 3r + 1 \, .
\end{aligned}
\]
Hence $\dim \mathcal{D}_{d,g,r} = \dim \mathcal{F}_{d,g,r} = h^0 (X_r, N_{X_r / \mathbb{P}^{r}}) = \dim 
T_{[X_r]} \mathcal{D}_{d,g,r}$, which completes the proof of {\rm Theorem A}.
\end{proof}

\begin{coro}\label{Coro_Reg_comp_exists}
If
\begin{equation}\label{condition}
  \gamma ~|~ 2(g-1) \quad \mbox{ and } \quad r := \frac{2(g-1)}{\gamma} \geq \gamma \geq 10,
\end{equation} 
then $\mathcal{D}_{d, g, r}$ is a regular component of $\mathcal{I}_{d, g, r}$ different from the distinguished one.
\end{coro}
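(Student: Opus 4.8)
The plan is to deduce the statement directly from {\rm Theorem A} together with an elementary dimension count in $\mathcal{M}_g$. First, the hypothesis $\gamma \mid 2(g-1)$ together with $r = \frac{2(g-1)}{\gamma}$ says exactly that $r\gamma = 2(g-1)$, so the correction term in the dimension formula of {\rm Theorem A} vanishes and $\dim \mathcal{D}_{d,g,r} = \lambda_{d,g,r} + r\gamma - 2g + 2 = \lambda_{d,g,r}$. The proof of {\rm Theorem A} also shows that at a general point $[X_r] \in \mathcal{D}_{d,g,r}$ one has $\dim T_{[X_r]} \mathcal{D}_{d,g,r} = h^0(X_r, N_{X_r/\mathbb{P}^r}) = \dim \mathcal{D}_{d,g,r}$, so $\mathcal{I}_{d,g,r}$ is smooth at $[X_r]$; hence $\mathcal{D}_{d,g,r}$ is a reduced component of the expected dimension $\lambda_{d,g,r}$, that is, a regular component in the sense of the Introduction.

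Next I would confirm that the distinguished component exists at all, i.e. that $\rho(d,g,r) \geq 0$, so that the assertion is not vacuous. Substituting $d = 2g - 4\gamma + 2$ and, from $r\gamma = 2(g-1)$, $g = \frac{r\gamma}{2} + 1$ into $\rho(d,g,r) = g - (r+1)(g-d+r)$, one simplifies to $\rho(d,g,r) = \frac{r^2(\gamma-2)}{2} - r(3\gamma-2) - 4(\gamma-1)$; since $r \geq \gamma \geq 10$ the quadratic leading term dominates (it is enough that $\gamma^2 - 8\gamma - 4 \geq 0$), so $\rho(d,g,r) > 0$. By \cite[p. 70]{Har82} the scheme $\mathcal{I}_{d,g,r}$ then carries its unique component dominating $\mathcal{M}_g$, the distinguished one.

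Finally, to separate $\mathcal{D}_{d,g,r}$ from the distinguished component it suffices to show that $\mathcal{D}_{d,g,r}$ does not dominate $\mathcal{M}_g$. By {\rm Theorem A}, a curve $X_r$ representing a general point of $\mathcal{D}_{d,g,r}$ is the isomorphic image, under a general projection, of a curve $X_R$ which is a double cover $\varphi : X_R \to Y$ of a smooth curve $Y$ of genus $\gamma$; in particular the image of $\mathcal{D}_{d,g,r}$ under the natural rational map $\mathcal{I}_{d,g,r} \dashrightarrow \mathcal{M}_g$ lies in the locus of genus-$g$ curves admitting a degree-$2$ morphism onto a smooth genus-$\gamma$ curve. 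That locus is swept out by the choice of $Y$ ($3\gamma - 3$ parameters), of a branch divisor on $Y$ of degree $2(g-1) - 4(\gamma-1) = 2g - 4\gamma + 2$, and of a square root of the associated line bundle (finitely many), so it has dimension $2g - \gamma - 1 < 3g - 3 = \dim \mathcal{M}_g$ since $\gamma \geq 10$. Hence $\mathcal{D}_{d,g,r}$ does not dominate $\mathcal{M}_g$ and is therefore distinct from the distinguished component. The only genuine bookkeeping is the verification $\rho(d,g,r) \geq 0$ and the parameter count for the double-cover locus; given {\rm Theorem A} there is no serious obstacle.
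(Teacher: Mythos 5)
Your proposal is correct and follows essentially the same route as the paper: it uses Theorem A (with $r\gamma = 2(g-1)$ killing the correction term) for regularity, checks $\rho(d,g,r) > 0$ so the distinguished component exists, and separates the two components because curves in $\mathcal{D}_{d,g,r}$ are double covers of genus-$\gamma$ curves, so $\mathcal{D}_{d,g,r}$ cannot dominate $\mathcal{M}_g$. The only differences are cosmetic: the paper verifies $\rho > 0$ via the quick bound $5r \leq g-1$ giving $d - g - r \geq 3$, and merely notes that $\mathcal{D}_{d,g,r}$ ``projects properly'' into $\mathcal{M}_g$, whereas you expand $\rho$ explicitly and make the $2g - \gamma - 1 < 3g - 3$ parameter count of the double-cover locus explicit.
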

\begin{proof}
With the particular values of $d = 2g - 4\gamma + 2$ and $r$ we have $r = \frac{2(g-1)}{\gamma} \leq \frac{g-1}{5}$ for $\gamma 
\geq 10$, hence $5r \leq g-1$. From here it is easy to see that $d - g - r \geq g+2 - 5r \geq 3$. 
Therefore $\rho(d, g, r) = g - (r+1)(g-d+r) \geq g > 0$, hence the distinguished component of $\mathcal{I}_{d, g, r}$ 
dominating $\mathcal{M}_g$ exists. Apart 
from it, {\rm Theorem A} guarantees the existence of the regular component $\mathcal{D}_{d, g, r}$ which is apparently 
different from the distinguished one as the former projects properly in $\mathcal{M}_g$.
\end{proof}

\begin{remark}
It appears that the condition $r \geq \frac{2(g-1)}{\gamma}$ is essential for the family $\mathcal{F}_{d, g, r}$ giving rise to 
a component of $\mathcal{I}_{d, g, r}$, because in such case $r < \frac{2(g-1)}{\gamma}$ we have $\dim \mathcal{F}_{d, g, r} < 
\lambda_{d,g,r}$. Notice also that in such a case the Gaussian map in {\rm Proposition \ref{Prob_Gauss_NBundle}} is definitely 
not surjective.
\end{remark}

\begin{remark}\label{Sec3_2_LinNorm_comp_exists}
In their paper \cite{MS1989} Mezzetti and Sacchiero constructed generically smooth irreducible components of $\mathcal{I}_{d, g, 
r}$, denoted there $W^{m}_{d,g,r}$, whose general points are $m-$ sheeted coverings of $\mathbb{P}^1$, where $m \geq 3$. 
In the case of $g = 6\gamma - 3$, we have $d = 2g - 4\gamma + 2 = 8\gamma - 4 = \frac{4}{3}g$, $R = g - 3\gamma + 2 = 3\gamma - 
1 = \frac{g+1}{2}$, and the Hilbert scheme $\mathcal{I}_{\frac{4g}{3}, g, \frac{g+1}{2}}$ has two components parametrizing 
linearly normal curves. One of them is the component $\mathcal{D}_{\frac{4g}{3}, g, \frac{g+1}{2}}$, shown to exist in our
Proposition \ref{Prop_construct_DdgR}, and the other one is the component $W^{m}_{d,g,r}$ for $m = 4$, $d = \frac{4}{3}g$, 
and $r = R = \frac{g+1}{2}$ (it is easy to check that the numerical conditions for the existence of $W^{4}_{\frac{4g}{3}, g, 
\frac{g+1}{2}}$ are satisfied when $\gamma \geq 10$). Notice that since $d - g - R = - \gamma < 0$, the existence 
of these two components do not provide a counterexample to \emph{Severi's conjecture} claiming that $\mathcal{I}_{d, g, r}$ has 
a unique irreducible component parametrizing linearly normal curves if $d \geq g+r$.
\end{remark}

\vfill


\begin{thebibliography}{99}

\bibitem{ACGH}
 Arbarello, E.,  Cornalba, M., Griffiths, A., Harris, J.: Geometry of algebraic curves I, Vol. 267,
  Grundlehren der Mathematischen Wissenschaften, Springer (1985)
  
\bibitem{Bal1995}
Ballico, E.:
 On the minimal free resolution of some projective curves.
 Ann. Mat. Pura Appl. (4) 168, 63--74  (1995)

\bibitem{CCFM2008}
Calabri, A., Ciliberto, C., Flamini, F., Miranda, R.:
Non-special scrolls with general moduli.
 Rend. Circ. Mat. Palermo (2) 57(1), 1--31 (2008)

\bibitem{CCFM2009}
Calabri, A., Ciliberto, C., Flamini, F., Miranda, R.:
Special scrolls whose base curve has general moduli.
 Contemp. Math., 496, 133--155 (2009)

\bibitem{CHM88}
Ciliberto, C., Harris, J.,  Miranda, R.:
On the surjectivity of the Wahl map.
Duke Math. J., 57(3), 829--858 (1988)

\bibitem{CIK17}
Choi, Y., Iliev, H., Kim, S.:
Reducibility of the Hilbert scheme of smooth curves and families of
  double covers.
Taiwanese J. Math. 21(3), 583--600 (2017)

\bibitem{Cili83}
Ciliberto, C.:
Sul grado dei generatori dell'anello canonico di una superficie di
  tipo generale.
Rend. Sem. Mat. Torino 41(3), 83--111 (1983)

\bibitem{CLM96}
Ciliberto, C., Lopez, A., Miranda, R.:
Some remarks on the obstructedness of cones over curves of low genus.
Higher-dimensional complex varieties (Trento, 1994), 
  167--182. de Gruyter, Berlin (1996)

\bibitem{CilMir1990}
Ciliberto, C., Miranda, R.:
On the Gaussian map for canonical curves of low genus.
Duke Math. J. 61(2), 417--443  (1990)

\bibitem{CS89}
Ciliberto, C., Sernesi, E.:
 Families of varieties and the Hilbert scheme.
Lectures on Riemann surfaces, (Trieste, 1987), 428--499, World Sci. Publ., Teaneck, NJ (1989)


\bibitem{Ein86}
Ein, L.:
Hilbert scheme of smooth space curves.
 Ann. Sci. \'Ecole Norm. Sup. (4) 19(4), 469--478 (1986)

\bibitem{Ein87}
Ein, L.:
The irreducibility of the Hilbert scheme of smooth space curves.
 In Algebraic geometry,  Bowdoin, 1985 (Brunswick, Maine,
  1985), Vol. 46,  Sympos. Pure Math.,  83--87. Amer.
  Math. Soc. (1987)

\bibitem{GP2005}
Fuentes Garcia, L., Pedreira, M.:
 Canonical geometrically ruled surfaces.
Math. Nachr. 278(3), 240--257 (2005)

\bibitem{FGP05}
Fuentes Garcia, L., Pedreira, M.:
The projective theory of ruled surfaces.
 Note di Matematica  24(1), 25--63 (2005)

\bibitem{GL1986}
Green, M., Lazarsfeld, R.:
On the projective normality of complete linear series on an algebraic
  curve.
Invent. Math. 83(1), 73--90 (1986)

\bibitem{Green84}
Green, M.:
Koszul cohomology and the geometry of projective varieties.
 J. Differential Geom. 19(1), 125--171 (1984)

\bibitem{Hart77}
Hartshorne, R.:
Algebraic geometry.
Springer (1977)
 

\bibitem{Har82}
Harris, J.:
 Curves in projective space,
 Presses de l'Universit\'e de Montr\'eal, Montreal, Que. (1982)


\bibitem{Kee94}
Keem, C.:
 Reducible Hilbert scheme of smooth curves with positive
  Brill-Noether number.
 Proc. Amer. Math. Soc. 122(2), 349--354 (1994)

\bibitem{MS1989}
Mezzetti, E., Sacchiero, G.:
Gonality and Hilbert schemes of smooth curves.
 Algebraic curves and projective geometry (Trento, 1988),
  Vol. 1389,  Lecture Notes in Math. 183--194 Springer (1989)

\bibitem{Mir85}
Miranda, R.:
 Triple covers in algebraic geometry.
Amer. J. Math.
          107(5), 1123--1158 (1985)
  
\bibitem{Ser84}
Sernesi, E.:
On the existence of certain families of curves.
 Invent. Math. 75(1), 25--57 (1984)

\bibitem{Sev1921}
Severi, F.:
Vorlesungen \"uber algebraische Geometrie, Teubner , Leibzig (1921)

\bibitem{Wahl90}
Wahl, J.:
 Gaussian maps on algebraic curves.
J. Differential Geom. 32(1), 77--98 (1990)

\end{thebibliography}


\end{document}